\numberwithin{theorem}{section}
\newcommand{\TheTitle}{Iteratively reweighted $\ell_1$ algorithms with extrapolation}
\newcommand{\TheAuthors}{Peiran Yu and Ting Kei Pong}
\headers{\TheTitle}{\TheAuthors}
\title{{\TheTitle}}
\author{
  Peiran Yu\thanks{Department of Applied
Mathematics, the Hong Kong Polytechnic University, Hong Kong. \email{peiran.yu.yu@connect.polyu.hk}.}
  \and
  Ting Kei Pong\thanks{Department of Applied Mathematics, the Hong Kong Polytechnic University, Hong Kong.
This author's work was supported in part by Hong Kong Research Grants Council PolyU153085/16p \email{tk.pong@polyu.edu.hk}.}
}
\def\Argmin{\mathop{\rm Arg\,min}}
\def\argmin{\mathop{\rm arg\,min}}
\def\dist{\mbox{\rm dist}\,}
\def\dom{\mbox{\rm dom}\,}
\begin{document}

\maketitle

\begin{abstract}
Iteratively reweighted $\ell_1$ algorithm is a popular algorithm for solving a large class of optimization problems whose objective is the sum of a Lipschitz differentiable loss function and a possibly nonconvex sparsity inducing regularizer. In this paper, motivated by the success of extrapolation techniques in accelerating first-order methods, we study how widely used extrapolation techniques such as those in \cite{BeTe09,AuTe06,LaLuMo11,Nes13} can be incorporated to possibly accelerate the iteratively reweighted $\ell_1$ algorithm. We consider three versions of such algorithms. For each version, we exhibit an explicitly checkable condition on the extrapolation parameters so that the sequence generated provably clusters at a stationary point of the optimization problem. We also investigate global convergence under additional Kurdyka-{\L}ojasiewicz assumptions on certain potential functions. Our numerical experiments show that our algorithms usually outperform the general iterative shrinkage and thresholding algorithm in \cite{GoZhLuHuYe13} and an adaptation of the iteratively reweighted $\ell_1$ algorithm in \cite[Algorithm~7]{Lu2014} with nonmonotone line-search for solving random instances of log penalty regularized least squares problems in terms of both CPU time and solution quality.
\end{abstract}

\begin{keywords}
  Iteratively reweighted $\ell_1$ algorithm, extrapolation, Kurdyka-{\L}ojasiewicz property
\end{keywords}

\begin{AMS}
  90C06, 90C26, 90C30, 90C90
\end{AMS}

\section{Introduction}
Originally proposed for unveiling underlying sparse solutions of an underdetermined system, the iteratively reweighted $\ell_1$ algorithm has been widely studied for minimizing optimization models that attempt to induce sparsity in the solutions; see, for example, \cite{CaWaBo08,ChYi08,FoLa09,Lu2014,WiNa10,ZhLi12}.
Here, we consider this algorithm for solving the following class of optimization problem
\begin{equation}\label{primal}
v:=\min\limits_{x\in \mathbb{R}^{n}} F(x):=f(x)+\delta_C(x)+\Phi(|x|),
\end{equation}
where $f:\mathbb{R}^{n}\rightarrow\mathbb{R}$ is a smooth convex function with a Lipschitz continuous gradient whose Lipschitz modulus is $L$, $C$ is a nonempty closed convex set, and $x\mapsto \Phi(|x|)$ is a sparsity inducing function: specifically, we assume that $\Phi(y) = \sum_{i=1}^n\phi(y_i)$, where $\phi:\mathbb{R}_+\rightarrow \mathbb{R}_+$ is a continuous concave function with $\phi(0) = 0$ that is continuously differentiable on $(0,\infty)$, and the limit $\ell:=\lim_{t\downarrow 0}\phi'(t)$ exists. We also assume that $F$ is level-bounded and hence $v>-\infty$ as a consequence. Problem~\eqref{primal} arises naturally in applications such as compressed sensing \cite{CaTa2005,FoRa13} and statistical variable selections \cite{Tib96,FaLi01,ZoTr05,Zha10}, where $f$ is typically a loss function for data fidelity, $C$ is a simple closed convex set (encoding, for example, nonnegativity constraints or box constraints), and $\phi$ can be, for example, the smoothly clipped absolute deviation (SCAD) function \cite{FaLi01}, the minimax concave penalty (MCP) function \cite{Zha10} or the log penalty function \cite{Nik05}.\footnote{Note that when $f$ is the least squares loss function and $\Phi(|\cdot|)$ is the MCP or SCAD function, the function $f(\cdot)+\Phi(|\cdot|)$ is not level-bounded (though it necessarily has a minimizer). However, the level-boundedness of $F$ can still be enforced by picking $C$ to be a huge box, i.e., $C = [-M,M]^n$ for a sufficiently large $M > 0$ so that $C$ intersects $\Argmin_x\{f(x) + \Phi(|x|)\}$. For this choice of $C$, the optimal value of $F$ is the same as that of $f(\cdot)+\Phi(|\cdot|)$.}

When the iteratively reweighted $\ell_1$ algorithm is applied to \eqref{primal}, the sparsity inducing regularizer in the objective is approximated by a weighted $\ell_1$ norm in each iteration, and the resulting subproblem is then approximately solved to produce the next iterate. These subproblems may not have closed form solutions in general due to the function $f$, and a variant of the algorithm was proposed in \cite{Lu2014} that allows majorization of the smooth function $f$ by a quadratic function with a constant Hessian in each iteration to simplify the subproblem. Moreover, a line-search strategy was incorporated for empirical acceleration; see \cite[Algorithm~7]{Lu2014}.

While performing line-search provides an empirical way for accelerating an optimization method, incorporating extrapolation is another classical technique for empirical acceleration. The application of extrapolation techniques has a long history, dating back to Polyak's heavy ball method \cite{Polyak64}. During the past decade, Nesterov's extrapolation techniques \cite{Nes83,Nes03,Nes09,Nes13} have been widely adopted and so-called optimal first-order methods have been developed for convex composite optimization problems; see, for example, \cite{Nes83,AuTe06,BeCaGr11,BeTe09,Tse10,LaLuMo11}. Roughly speaking, these are first-order methods that exhibit a function value convergence rate of $O(1/k^2)$, where $k$ is the number of iterations. Extrapolation techniques have also been applied to the proximal gradient algorithm for some classes of nonconvex problems and good empirical performance has been observed; see, for example, \cite{GhLa16,WeChPo16,XuYi13}.

In view of the success in accelerating first-order methods such as the proximal gradient algorithm empirically via extrapolations, in this paper, we investigate how extrapolation techniques can be suitably incorporated into iteratively reweighted $\ell_1$ algorithms for solving \eqref{primal}. We specifically consider extrapolation techniques motivated from three popular optimal first-order methods: the fast iterative soft-thresholding algorithm (FISTA) \cite{BeTe09,Nes13}, the method by Auslender and Teboulle \cite{AuTe06} and the method by Lan, Lu and Monteiro \cite{LaLuMo11}. We call the corresponding iteratively reweighted $\ell_1$ algorithms with extrapolation IRL$_1e_1$, IRL$_1e_2$ and IRL$_1e_3$, respectively. For each algorithm, we show that the sequence generated clusters at a stationary point of \eqref{primal} under certain condition on the extrapolation parameters. These conditions are satisfied by many choices of extrapolation parameters: for instance, one can pick the parameters as in FISTA with fixed restart \cite{ODoCa15} in IRL$_1e_1$. Furthermore, under some additional assumptions such as the Kurdyka-{\L}ojasiewicz property on some suitable potential functions (see for example, \cite{AtBoReSo10,AtBoSv13}), we show that the whole sequence generated by IRL$_1e_1$ and IRL$_1e_3$ are indeed convergent. We then perform numerical experiments comparing our algorithms (with our proposed choices of extrapolation parameters) against the general iterative shrinkage and thresholding algorithm (GIST) \cite{GoZhLuHuYe13} and an adaptation of the iteratively reweighted $\ell_1$ algorithm \cite[Algorithm~7]{Lu2014} with nonmonotone line-search (IRL$_1{ls}$) for solving log penalty regularized least squares problems on random instances. In our experiments, our iteratively reweighted $\ell_1$ algorithms with extrapolation usually outperform GIST and IRL$_1{ls}$ in both CPU time and solution quality. Moreover, IRL$_1e_1$ and IRL$_1e_3$ usually perform better than IRL$_1e_2$.

The rest of the paper is organized as follows. We present notation and some preliminary results in Section~\ref{preliminaries}. The convergence analyses of IRL$_1e_1$, IRL$_1e_2$ and IRL$_1e_3$ are presented in Sections~\ref{SecFirstExtr}, \ref{sec4} and \ref{sec5} respectively, and our numerical experiments are presented in Section~\ref{sec6}. Finally, some concluding remarks are given in Section~\ref{sec7}.

\section{Notation and preliminaries}\label{preliminaries}
In this paper, we use $\mathbb{R}^n$ to denote the $n$-dimensional Euclidean space with inner product $\left<\cdot,\cdot\right>$ and Euclidean norm $\|\cdot\|$ and use $\|\cdot\|_1$ and $\|\cdot\|_\infty$ to denote the $\ell_1$ and $\ell_\infty$ norms, respectively. We denote the set of vectors whose components are all nonnegative as $\mathbb{R}_+^n$. We also let $a\circ b$ denote the Hadamard (entrywise) product of $a,b\in\mathbb{R}^n$ and let  $|x|$ denote the vector whose $i$th entry is $|x_i|$.

We say that an extended-real-valued function $f: \mathbb{R}^{n}\rightarrow (-\infty,\infty]$ is proper if its domain ${\rm dom} f:=\{x:\; f(x)<\infty\}\neq \emptyset$. A proper function $f$ is said to be closed if it is lower semicontinuous. For a proper function $f$, the regular subdifferential of $f$ at $x\in {\rm dom} f$ is defined by
\[
\hat{\partial} f(x):=\left\{\zeta :\; \liminf\limits_{z\rightarrow x,z\neq x}\frac{f(z)-f(x)-\left<\zeta,z-x\right>}{\|z-x\|}\geqslant 0 \right\}.
\]
The subdifferential of $f$ at $x\in {\rm dom} f$ (which is also called the limiting subdifferential) is defined by
\begin{equation*}
\partial f(x):=\left\{\zeta :\; \exists x^{k}\stackrel{f}{\rightarrow} x, \zeta^{k}\rightarrow \zeta \ {\rm with\ } \zeta^k \in \hat{\partial} f(x^k)\  {\rm for\  each}\  k \right\},
\end{equation*}
where $x^{k}\stackrel{f}{\rightarrow} x$ means both $x^{k}\to x$ and $f(x^{k})\to f(x)$. Moreover, we set $\partial f(x) = \hat \partial f(x) = \emptyset$ for $x\notin {\rm dom}\, f$ by convention, and we write ${\rm dom}\,\partial f:= \{x:\; \partial f(x)\neq \emptyset\}$. It is known that the limiting subdifferential reduces to the classical subdifferential in convex analysis when $f$ is also convex; see \cite[Proposition~8.12]{RoWe09}. Moreover, $\partial f(x) = \{\nabla f(x)\}$ if $f$ is continuously differentiable at $x$ \cite[Exercise~8.8]{RoWe09}. On the other hand, for a locally Lipschitz function $f$, we recall that the Clarke subdifferential of $f$ at a point $x\in \mathbb{R}^n$ is given by
\[
\partial_{\circ}f(x):=\left\{\zeta: \limsup\limits_{y\rightarrow x, t\downarrow 0}\frac{f(y+th)-f(y)}{t}\geqslant\left<\zeta,h\right>\ {\rm for\ all\ } h\in \mathbb{R}^{n}\right\}.
\]
It is known from \cite[Theorem~5.2.22]{BoZh05} that $\partial f(x)\subseteq \partial_{\circ}f(x)$ for a locally Lipschitz function $f$ at any $x$.

For a nonempty closed set $C\subseteq \mathbb{R}^n$ and a vector $a\in \mathbb{R}^n$, we write for notational simplicity the following set
\[
a\circ C :=\left\{a\circ s\ :\ s\in C \right\}.
\]
The indicator function $\delta_C$ of $C$ is defined as
\[
\delta_C(x):=
\begin{cases}
0&  x\in C,\\
\infty & x\notin C.
\end{cases}
\]
The normal cone of $C$ at an $x\in C$ is defined as $N_C(x):=\partial \delta_C(x)$, and the distance from a point $x\in \mathbb{R}^n$ to $C$ is denoted by ${\rm dist}(x,C)$.

We next recall the Kurdyka-{\L}ojasiewicz (KL) property \cite{AtBo09,AtBoReSo10,AtBoSv13,BoDaLe07}. This property is satisfied by a large class of functions such as proper closed semi-algebraic functions; see, for example, \cite{AtBoReSo10,AtBoSv13}. For notational simplicity, we use $\Xi_a$ to denote the set of all concave continuous functions $\varphi: [0,a)\rightarrow \mathbb{R}_+$ that are continuously differentiable on $(0,a)$ with positive derivatives and satisfy $\varphi(0) = 0$.

\begin{definition}[KL property]
  We say that a proper closed function $h$ satisfies the Kurdyka-{\L}ojasiewicz (KL) property at $\hat x\in {\rm dom}\, \partial h$ if there exist an $a\in (0,\infty]$, a neighborhood $V$ of $\hat{x}$ and a $\varphi\in \Xi_a$ such that for any $x\in V$ with $h(\hat{x})<h(x)<h(\hat{x})+a$, it holds that
     \[
     \varphi'(h(x)-h(\hat{x})){\rm dist}(0,\partial h(x))\geqslant 1.
     \]
  If a function $h$ satisfies the KL property at any point in ${\rm dom} \partial h$, we say that it is a KL function.
\end{definition}

The next lemma concerns the uniformized KL property and was proved in \cite[Lemma~6]{BoSaTe14}.
\begin{lemma}[Uniformized KL property]\label{UKL}
Suppose that $h$ is a proper closed function and let $\Gamma$ be a compact set. If $h$ is a constant on $\Gamma$ and satisfies the KL property at each point of $\Gamma$, then there exist $\epsilon, a >0$ and $\varphi\in \Xi_a$ such that
\[
\varphi'(h(x)-h(\tilde{x})) {\rm dist}\left(0,\partial h(x)\right)\geqslant 1
\]
for any $\tilde{x}\in \Gamma$ and any $x$ satisfying $h(\tilde{x})<h(x)<h(\tilde{x})+a$ and ${\rm dist}(x,\Gamma)<\epsilon$.
\end{lemma}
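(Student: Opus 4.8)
The plan is to reduce this uniform statement to the pointwise KL property via a compactness argument on $\Gamma$, and then to amalgamate the finitely many desingularizing functions that arise into a single member of $\Xi_a$. First I would invoke the KL property of $h$ at each $\bar x\in\Gamma$: this yields, for every $\bar x\in\Gamma$, a radius $\epsilon_{\bar x}>0$, a level $a_{\bar x}\in(0,\infty]$ and a function $\varphi_{\bar x}\in\Xi_{a_{\bar x}}$ such that $\varphi_{\bar x}'(h(x)-h(\bar x))\,{\rm dist}(0,\partial h(x))\ge 1$ whenever $\|x-\bar x\|<\epsilon_{\bar x}$ and $h(\bar x)<h(x)<h(\bar x)+a_{\bar x}$. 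Since $\Gamma$ is compact and the open balls $B(\bar x,\epsilon_{\bar x})$, $\bar x\in\Gamma$, form an open cover of $\Gamma$, I would extract a finite subcover, say associated with $x_1,\dots,x_p\in\Gamma$, and set $a:=\min_{1\le i\le p}a_{x_i}\in(0,\infty]$.

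Next I would fix the uniform radius $\epsilon$. Let $U:=\bigcup_{i=1}^p B(x_i,\epsilon_{x_i})$, which is open and contains the compact set $\Gamma$; hence the map $x\mapsto{\rm dist}(x,\mathbb{R}^n\setminus U)$ is continuous and strictly positive on $\Gamma$, so it attains a positive minimum on $\Gamma$, call it $2\epsilon$. A triangle-inequality estimate then shows that any $x$ with ${\rm dist}(x,\Gamma)<\epsilon$ lies in $U$: picking $\tilde x\in\Gamma$ with $\|x-\tilde x\|<\epsilon$ gives ${\rm dist}(x,\mathbb{R}^n\setminus U)\ge 2\epsilon-\epsilon>0$, so $x\in B(x_j,\epsilon_{x_j})$ for some $j$. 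This compactness/positive-distance step is the only place where compactness of $\Gamma$ is genuinely used, and it is the step I would expect to need the most care to state cleanly, although it is entirely standard.

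Finally I would merge the desingularizing functions by setting $\varphi:=\sum_{i=1}^p\varphi_{x_i}$, restricted to $[0,a)$. Because each summand is concave, continuous on $[0,a)$, continuously differentiable on $(0,a)$, vanishes at $0$, and has a strictly positive derivative, the same holds for $\varphi$, so $\varphi\in\Xi_a$; moreover $\varphi'(s)=\sum_{i=1}^p\varphi_{x_i}'(s)\ge\varphi_{x_j}'(s)$ for every $j$ and every $s\in(0,a)$, since every term is positive. Now take any $\tilde x\in\Gamma$ and any $x$ with ${\rm dist}(x,\Gamma)<\epsilon$ and $h(\tilde x)<h(x)<h(\tilde x)+a$, and let $j$ be such that $x\in B(x_j,\epsilon_{x_j})$. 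Since $h$ is constant on $\Gamma$ we have $h(\tilde x)=h(x_j)$, and since $a\le a_{x_j}$ the pair $(x,x_j)$ satisfies the hypotheses of the pointwise KL inequality at $x_j$, whence
\[
\varphi'(h(x)-h(\tilde x))\,{\rm dist}(0,\partial h(x))\ \ge\ \varphi_{x_j}'(h(x)-h(x_j))\,{\rm dist}(0,\partial h(x))\ \ge\ 1,
\]
which is the desired uniformized inequality. The only real technical content is thus the finite-cover/positive-distance construction of $\epsilon$ and the observation that summing the $\varphi_{x_i}$ preserves membership in $\Xi_a$ while dominating each individual derivative; the rest is bookkeeping and relies only on $h$ being constant on $\Gamma$.
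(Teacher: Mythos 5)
Your argument is correct and is essentially the standard proof of this result: the paper does not prove the lemma itself but cites \cite[Lemma~6]{BoSaTe14}, and your compactness/finite-subcover construction of $\epsilon$ together with summing the finitely many desingularizing functions $\varphi_{x_i}$ is precisely the argument given there. The only cosmetic points are that the neighborhoods $V$ in the KL definition should be shrunk to balls (harmless) and that your auxiliary $\tilde x$ in the distance estimate reuses the name of the point in the statement, but neither affects the validity of the proof.
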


Before ending this section, we prove two auxiliary lemmas that will be used in our convergence analysis. The first lemma concerns properties of the regularizer $\phi$ in the objective function of \eqref{primal}.

\begin{lemma}\label{phiPropositions}
  Let $\phi:\mathbb{R}_+\rightarrow \mathbb{R}_+$ be a continuous concave function with $\phi(0) = 0$ that is continuously differentiable on $(0,\infty)$. Moreover, suppose that $\ell:=\lim_{t\downarrow 0}\phi'(t)$ exists. Then the following statements hold:
  \begin{enumerate}[{\rm (i)}]
    \item $\phi'(t)$ is nonincreasing and nonnegative when $t>0$, and $\ell=\phi'_{+}(0)\geqslant 0$.\footnote{Here and throughout, $\phi'_+(t)$ denotes the right-hand derivative, i.e., $\phi'_+(t):= \lim_{h\downarrow 0}\frac{\phi(t + h) - \phi(t)}{h}$.}
    \item $\partial\phi(|\cdot|)(t)=\phi_{+}'(|t|)\partial|t|$ for all $t\in \mathbb{R}$.
\end{enumerate}

\end{lemma}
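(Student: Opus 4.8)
The plan is to derive (i) purely from concavity together with the nonnegativity of $\phi$, and then to bootstrap (i) into the subdifferential identity in (ii); the only genuinely delicate point is the computation at the origin.

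For (i), I would first use that the derivative of a concave differentiable function is nonincreasing, so $\phi'$ is nonincreasing on $(0,\infty)$. To see $\phi'\ge 0$ there, suppose to the contrary that $\phi'(t_0) < 0$ for some $t_0 > 0$; monotonicity then gives $\phi'(t) \le \phi'(t_0) < 0$ for all $t \ge t_0$, hence $\phi(t) \le \phi(t_0) + \phi'(t_0)(t - t_0) \to -\infty$ as $t\to\infty$, contradicting $\phi \ge 0$. Taking $t\downarrow 0$ yields $\ell \ge 0$. Finally, for every $h > 0$ the mean value theorem provides a $\xi_h \in (0,h)$ with $\phi(h)/h = (\phi(h)-\phi(0))/h = \phi'(\xi_h)$; since $0 < \xi_h < h$, letting $h\downarrow 0$ forces $\xi_h\downarrow 0$, so $\phi'_+(0) = \lim_{h\downarrow 0}\phi(h)/h = \lim_{h\downarrow 0}\phi'(\xi_h) = \ell$.

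For (ii), note first that (i) gives $0\le\phi'(t)\le\ell$ for all $t>0$, so $\phi$ is $\ell$-Lipschitz on $\mathbb{R}_+$ and hence $g:=\phi(|\cdot|)$ is $\ell$-Lipschitz on $\mathbb{R}$. When $t\neq 0$, $g$ coincides on a neighborhood of $t$ with the continuously differentiable map $s\mapsto \phi(|s|)$, so $\partial g(t) = \hat\partial g(t) = \{\phi'(|t|)\sign(t)\}$; since $\partial|t| = \{\sign(t)\}$ and $\phi'_+(|t|) = \phi'(|t|)$ in this case, the claimed formula holds. It remains to treat $t = 0$, where we must show $\partial g(0) = [-\ell,\ell]$, which is exactly $\ell\cdot[-1,1] = \phi'_+(0)\,\partial|0|$. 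Writing the defining inequality of the regular subdifferential, $\zeta\in\hat\partial g(0)$ iff $\liminf_{z\to 0,\,z\neq 0}\frac{\phi(|z|)-\zeta z}{|z|}\ge 0$; splitting into $z\downarrow 0$ and $z\uparrow 0$ and using $\phi(|z|)/|z|\to\ell$ from the argument for (i), the two one-sided limits of the quotient are $\ell-\zeta$ and $\ell+\zeta$, so the liminf equals $\min\{\ell-\zeta,\ell+\zeta\}$ and $\hat\partial g(0) = [-\ell,\ell]$; in particular $\partial g(0)\supseteq[-\ell,\ell]$.

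For the reverse inclusion, let $\zeta\in\partial g(0)$, so there are $z^k\to 0$ and $\zeta^k\to\zeta$ with $\zeta^k\in\hat\partial g(z^k)$ (the requirement $g(z^k)\to g(0)$ being automatic by continuity). Passing to a subsequence along which $z^k$ has a constant sign, each $\zeta^k$ lies either in $\{\phi'(z^k)\}\subseteq[0,\ell]$ (if $z^k>0$), or in $\{-\phi'(|z^k|)\}\subseteq[-\ell,0]$ (if $z^k<0$), or in $\hat\partial g(0)=[-\ell,\ell]$ (if $z^k=0$); in every case $\zeta\in[-\ell,\ell]$. Hence $\partial g(0) = [-\ell,\ell] = \phi'_+(|0|)\,\partial|0|$, completing the proof. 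The main obstacle is precisely this computation at $0$: verifying that the liminf over the full punctured neighborhood equals the minimum of the two one-sided limits, and organizing the reverse inclusion through sign-constant subsequences; everything away from the origin follows immediately from smoothness.
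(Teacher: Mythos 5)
Your proof is correct, and it reaches the same conclusion by a partly different route. The paper leaves (i) as an exercise ("can be proved directly using the concavity, continuity and nonnegativity of $\phi$"); your contradiction argument for $\phi'\geqslant 0$ (a negative slope would drive the concave upper bound, hence $\phi$, to $-\infty$) and your mean-value-theorem identification of $\phi'_+(0)$ with $\ell$ fill in exactly what is meant. For (ii), both proofs compute $\hat\partial g(0)=[-\ell,\ell]$ identically from the definition of the regular subdifferential. The difference is in the reverse inclusion $\partial g(0)\subseteq[-\ell,\ell]$: the paper sandwiches $\hat{\partial}g(0)\subseteq\partial g(0)\subseteq\partial_{\circ}g(0)$ and evaluates the Clarke subdifferential via the gradient-limit formula of Borwein--Lewis, whereas you unwind the definition of the limiting subdifferential directly, observing that every regular subgradient at a nearby point $z\neq 0$ is $\pm\phi'(|z|)\in[-\ell,\ell]$ by (i), so any limit of such subgradients stays in $[-\ell,\ell]$. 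Your version is more elementary and self-contained (it avoids invoking the Clarke subdifferential and the inclusion $\partial f\subseteq\partial_\circ f$ from \cite[Theorem~5.2.22]{BoZh05}), at the cost of a slightly longer hands-on case analysis; the paper's version is shorter on the page but leans on two external theorems. The sign-constant subsequence step in your argument is actually unnecessary, since $\zeta^k\in[-\ell,\ell]$ holds for every $k$ regardless of the sign of $z^k$, but it does no harm.
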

\begin{proof}
Note that (i) can be proved directly using the concavity, continuity and nonnegativity of $\phi$. Here, we only prove (ii). Write $g(t)=\phi(|t|)$.
Then $g$ is differentiable at any $t\neq 0$ with $\partial g(t)=\phi'(|t|)\partial|t|$.

We now consider the case $t = 0$. In this case, we note first from the definition of regular subdifferential that
\begin{equation}\label{partialF}
\begin{split}
&\hat{\partial}g(0):=\left\{\mu : \liminf\limits_{y\rightarrow 0, y\neq 0}\frac{g(y)-g(0)-\mu y}{|y|} \geqslant 0\right\}\\
&=\left\{\mu :\min\left\{\liminf\limits_{y\rightarrow 0,  y>0}\frac{g(y)-g(0)-\mu y}{|y|} ,\liminf\limits_{y\rightarrow 0,  y<0}\frac{g(y)-g(0)-\mu y}{|y|} \right\}\geqslant 0\right\}\\
&=\left\{\mu :\min\left\{\phi'_+(0)-\mu,\phi'_+(0)+\mu\right\}\geqslant 0\right\}=[-\phi'_+(0),\phi'_+(0)].
\end{split}
\end{equation}
In addition, from \cite[Theorem~6.2.5]{BoLe06} and the formula of $\nabla g(t)$ when $t\neq 0$, we have
\begin{equation*}
\partial_{\circ}g(0)={\rm conv\ }\left\{\lim\limits_{i}\nabla g(t_i):t_i\rightarrow 0, t_i\neq0\right\}=[-\ell,\ell].
\end{equation*}
Since $\ell = \phi'_+(0)$ according to (i), the above equality together with \eqref{partialF} gives
\[
[-\phi'_+(0),\phi'_+(0)]=\hat{\partial}g(0)\subseteq \partial g(0)\subseteq \partial_{\circ}g(0)=[-\phi'_+(0),\phi'_+(0)],
\]
where the first inclusion follows from the definition of the subdifferentials and the second inclusion follows from \cite[Theorem~5.2.22]{BoZh05}.
\end{proof}

Our second auxiliary lemma has to do with the first-order necessary condition of \eqref{primal}. Recall that a point $\bar x$ is said to satisfy the first-order necessary condition of \eqref{primal} if
\begin{equation}\label{necessary}
0\in \partial\left( f(\cdot)+\delta_C(\cdot)+\Phi(|\cdot|)\right)(\bar{x}).
\end{equation}
Such a point is called a stationary point, and it is known from \cite[Theorem~10.1]{RoWe09} that any local minimizer of \eqref{primal} is a stationary point.
Note that computing the subdifferential in \eqref{necessary} directly from definition can be complicated. In our next lemma, we show that the subdifferential in \eqref{necessary} equals  the sum of $\nabla f(\bar x)$, the normal cone of $C$ at $\bar x$ and the set $\Phi_{+}'(|\bar x|)\circ\partial|\bar x|$; here and throughout this paper, for any $y\in\mathbb{R}^n$, we write
\begin{equation*}
\begin{split}
\Phi'_{+}(|y|)&:=(\phi_{+}'(|y_{1}|),\phi_{+}'(|y_{2}|),\dots,\phi_{+}'(|y_{n}|))\in \mathbb{R}_+^n,\\
\partial |y|&:= \partial|y_1|\times \partial|y_2|\times \cdots \times \partial|y_n|\subset \mathbb{R}^n.
\end{split}
\end{equation*}
Notice that $\Phi'_{+}(|y|)\in \mathbb{R}_+^n$ for all $y\in \mathbb{R}^n$ is a consequence of Lemma~\ref{phiPropositions}(i).

\begin{lemma}\label{subOfPhi}
The objective of \eqref{primal} satisfies the following equation for any $x\in C$:
\begin{equation*}
\partial F(x)=\nabla f(x)+N_C(x)+\Phi_{+}'(|x|)\circ\partial|x|.
\end{equation*}
\end{lemma}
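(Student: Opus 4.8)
The plan is to strip the three summands of $F$ off one at a time: the smooth term $f$ via the exact sum rule for $C^1$ functions, the separable regularizer $\Phi(|\cdot|)$ via the product rule for separable functions together with Lemma~\ref{phiPropositions}(ii), and the indicator $\delta_C$ via a Lipschitz sum rule. A preliminary observation I would record first is that $\Phi(|\cdot|)$ is globally Lipschitz: by Lemma~\ref{phiPropositions}(i), $\phi'$ is nonincreasing on $(0,\infty)$ with $\phi'(t)\le\ell$ there, so each map $t\mapsto\phi(|t|)$ is Lipschitz with modulus $\ell$, and hence so is $\Phi(|\cdot|)$.

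Because $\Phi(|x|)=\sum_{i=1}^n\phi(|x_i|)$ is separable, I would apply the product formula for separable functions, which holds for both the regular and the limiting subdifferential (see, e.g., \cite[Proposition~10.5]{RoWe09}), to obtain $\partial\Phi(|\cdot|)(x)=\partial\phi(|\cdot|)(x_1)\times\cdots\times\partial\phi(|\cdot|)(x_n)$ and the analogous identity with $\hat\partial$ in place of $\partial$. Lemma~\ref{phiPropositions}(ii) identifies each factor as $\phi'_+(|x_i|)\partial|x_i|$, which gives $\partial\Phi(|\cdot|)(x)=\Phi'_+(|x|)\circ\partial|x|$. Moreover, the computation carried out inside the proof of Lemma~\ref{phiPropositions} shows that $\hat\partial\phi(|\cdot|)(t)=\partial\phi(|\cdot|)(t)$ for every $t\in\mathbb{R}$ (away from $0$ the function is $C^1$, and at $t=0$ both subdifferentials equal $[-\ell,\ell]$), so in fact $\hat\partial\Phi(|\cdot|)(x)=\partial\Phi(|\cdot|)(x)=\Phi'_+(|x|)\circ\partial|x|$ as well.

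I would then merge $\delta_C$ with $\Phi(|\cdot|)$. For the inclusion ``$\subseteq$'', since $\Phi(|\cdot|)$ is locally Lipschitz, a sum rule for Lipschitz functions (e.g., \cite[Corollary~10.9]{RoWe09}) applies — its qualification condition holding automatically by Lipschitzness — and yields $\partial(\delta_C+\Phi(|\cdot|))(x)\subseteq N_C(x)+\partial\Phi(|\cdot|)(x)$. For the reverse inclusion I would use only the elementary fact $\hat\partial u(x)+\hat\partial w(x)\subseteq\hat\partial(u+w)(x)$ with $u=\delta_C$ and $w=\Phi(|\cdot|)$: since $C$ is convex, $\hat\partial\delta_C(x)=N_C(x)$, and since $\hat\partial\Phi(|\cdot|)(x)=\partial\Phi(|\cdot|)(x)$ from the previous step, this gives $N_C(x)+\Phi'_+(|x|)\circ\partial|x|\subseteq\hat\partial(\delta_C+\Phi(|\cdot|))(x)\subseteq\partial(\delta_C+\Phi(|\cdot|))(x)$. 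Combining the two inclusions, $\partial(\delta_C+\Phi(|\cdot|))(x)=N_C(x)+\Phi'_+(|x|)\circ\partial|x|$; finally, since $f\in C^1$, adding it back by \cite[Exercise~8.8]{RoWe09} produces $\partial F(x)=\nabla f(x)+N_C(x)+\Phi'_+(|x|)\circ\partial|x|$, as claimed.

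The step I expect to be the crux is the reverse inclusion in the $\delta_C+\Phi(|\cdot|)$ calculus, since the generic Lipschitz sum rule supplies only ``$\subseteq$'' and the $C$-part and the $\Phi$-part of a limiting subgradient are in principle approached along different sequences, so one cannot simply pass to the limit. What unlocks ``$\supseteq$'' is the regularity of $\Phi(|\cdot|)$, i.e. the identity $\hat\partial\Phi(|\cdot|)=\partial\Phi(|\cdot|)$, which reduces the problem to the easy sum rule for regular subdifferentials; and this regularity is precisely what the comparison of $\hat\partial\phi(|\cdot|)(0)$ with $\partial\phi(|\cdot|)(0)$ in the proof of Lemma~\ref{phiPropositions} establishes at the only points (those with a vanishing coordinate) where it is not automatic. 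An alternative route that avoids the regularity discussion is to write $F$ as the difference of the closed proper convex function $f+\ell\|\cdot\|_1+\delta_C$ and the function $\sum_i\psi(|x_i|)$ with $\psi(t):=\ell t-\phi(t)$, which is $C^1$ on $\mathbb{R}^n$ since $\psi'(t)\to0$ as $t\downarrow0$, and then apply the smooth sum rule and convex calculus; the identity then follows by an entrywise comparison.
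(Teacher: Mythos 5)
Your proof is correct, and it reaches the same conclusion with the same general toolbox (the calculus rules of \cite{RoWe09} together with Lemma~\ref{phiPropositions}(ii)), but the way you certify the crucial regularity of $\Phi(|\cdot|)$ --- and how you then exploit it --- differs from the paper. The paper extends $\phi$ to a $C^1$ function $\tilde\phi$ on all of $\mathbb{R}$ (by setting $\tilde\phi(t)=\ell t$ for $t\leqslant 0$), writes $\phi(|t|)=\max\{\tilde\phi(t),\tilde\phi(-t)\}$, and invokes the amenability machinery of \cite{RoWe09} to conclude that $\Phi(|\cdot|)$ is regular; since $f$ and $\delta_C$ are also regular on $C$, the equality case of the sum rule \cite[Corollary~10.9]{RoWe09} then gives the formula in one stroke. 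You instead read regularity off coordinatewise from the sandwich $\hat\partial\phi(|\cdot|)(0)\subseteq\partial\phi(|\cdot|)(0)\subseteq\partial_{\circ}\phi(|\cdot|)(0)$ already computed in the proof of Lemma~\ref{phiPropositions}(ii) (the case $t\neq0$ being the smooth one), lift it to $\Phi(|\cdot|)$ via the separable formula \cite[Proposition~10.5]{RoWe09}, and then obtain the two inclusions separately: the Lipschitz sum rule for ``$\subseteq$'' and the elementary inclusion $\hat\partial u+\hat\partial w\subseteq\hat\partial(u+w)$ followed by $\hat\partial\subseteq\partial$ for ``$\supseteq$''. Your route is more elementary --- it avoids amenability entirely and makes explicit exactly where the generic one-sided sum rule would fall short, which the paper's appeal to the equality case of \cite[Corollary~10.9]{RoWe09} leaves implicit --- at the cost of being somewhat longer; the paper's route buys a one-line conclusion once amenability is in hand. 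Your closing alternative via the decomposition $\phi(|t|)=\ell|t|-\psi(|t|)$ with $\psi(t)=\ell t-\phi(t)$ smooth is also sound and would sidestep nonconvex regularity altogether.
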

\begin{proof}
Since $\partial \delta_C(x)\neq \emptyset$ at any $x\in C$, by \cite[Corollary~8.11]{RoWe09} and \cite[Proposition~8.12]{RoWe09}, we know that $f$ and $\delta_C$ are regular at any point in $C$.

Let $\tilde \phi:\mathbb{R}\to \mathbb{R}$ be defined so that $\tilde \phi(t) = \phi(t)$ when $t > 0$ and $\tilde \phi(t) = \ell t$ otherwise. Then $\tilde\phi$ is continuously differentiable in view of Lemma~\ref{phiPropositions}(i) and $\phi(|t|)= \max\{\tilde{\phi}(t), \tilde{\phi}(-t)\}$ for all $t\in \mathbb{R}$. Using these, we deduce from \cite[Example~10.24(e)]{RoWe09} that $\phi(|\cdot|)$ is amenable. This together with \cite[Exercise~10.26(a)]{RoWe09} implies that  $\Phi(|\cdot|)$ is amenable. Consequently, $\Phi(|\cdot|)$ is regular, thanks to \cite[Exercise~10.25(a)]{RoWe09}.

Therefore, using \cite[Corallary~10.9]{RoWe09}, we see that
\begin{equation*}
\partial F(x)= \nabla f(x)+N_C(x)+\partial \Phi(|x|)=\nabla f(x)+N_C(x)+\Phi_{+}'(|x|)\circ\partial|x|,
\end{equation*}
where  the second equality follows from \cite[Proposition~10.5]{RoWe09} and Lemma~\ref{phiPropositions}(ii).
\end{proof}

\section{Iteratively reweighted $\ell_1$ algorithm with type-I extrapolation}\label{SecFirstExtr}

In this section, we propose and analyze an iteratively reweighted $\ell_1$ algorithm with an extrapolation technique motivated from FISTA \cite{BeTe09,Nes13}: this technique has been widely studied in both convex and nonconvex settings; see, for example, \cite{BeCaGr11,BeTe09,Tse10,Nes13,WeChPo16}. We call the algorithm based on this extrapolation technique the iteratively reweighted $\ell_1$ algorithm with type-I extrapolation (IRL$_1e_1$). This algorithm is presented in Algorithm~\ref{alg1} below.

\begin{algorithm}
\caption{\bf Iteratively reweighted $\ell_1$ algorithm with type-I extrapolation (IRL$_1e_1$)}\label{alg1}
\begin{algorithmic}
\item[Step 0.] Input an initial point $x^0=x^{-1}\in C$ and $\{\beta_k\}\subset [0,1)$. Set $k=0$.

\item[Step 1.] Set
\begin{equation}\label{chap4:scheme}
\begin{cases}
&s^{k+1}=\Phi_{+}'(|x^{k}|); \\
&y^{k}  =x^{k}+\beta_{k}(x^{k}-x^{k-1});\\
&\displaystyle x^{k+1}=\argmin_{y\in C}\left\{\left<\nabla f(y^{k}), y-y^{k}\right>+\frac{L}{2}\|y-y^{k}\|^{2}+\sum\limits_{i=1}^{n}s_{i}^{k+1}|y_{i}|\right\}.
\end{cases}
\end{equation}

\item[Step 2.] If a termination criterion is not met, set $k=k+1$ and go to Step 1.
\end{algorithmic}
\end{algorithm}

We next present our global convergence analysis. We will first characterize the cluster points of the sequence generated by the algorithm under suitable assumptions on $\{\beta_k\}$, and then show that the whole sequence is convergent under further assumptions. Our analysis makes extensive use of the following auxiliary function, and is similar to the analysis in \cite{WeChPo16}:
 \begin{align}\label{H_1}
H_1(x,y):=f(x)+\delta_C(x)+\Phi(|x|)+\frac{L}{2}\|x-y\|^{2}.
\end{align}
We start by showing that any accumulation point of the sequence $\{x^{k}\}$ generated by IRL$_1e_1$ is a stationary point of \eqref{primal} under the additional assumption that $\sup_k\beta_k<1$. This assumption is general enough to accommodate a wide variety of choices of extrapolation parameters such as those used in FISTA with both fixed and adaptive restart strategies \cite{ODoCa15}. This latter choice of $\{\beta_k\}$ was shown empirically to be highly effective in accelerating the proximal gradient algorithm for convex composite optimization problems \cite{ODoCa15} and the proximal difference-of-convex algorithm for a class of difference-of-convex optimization problems \cite{WeChPo16}.

\begin{theorem}\label{Type1:thm1}
 Suppose that $\sup_{k\geqslant 0}\beta_{k}<1$ and let $\{x^{k}\}$ be the sequence generated by IRL$_1e_1$ for solving \eqref{primal}.  Then the following statements hold:
\begin{enumerate}[{\rm(i)}]
  \item $\{H_1(x^{k},x^{k-1})\}_{k\geqslant0}$ is a nonincreasing convergent sequence. Moreover, there exists a positive constant $D_1$ such that
  \begin{align}\label{1type:dayu}
  H_1(x^{k},x^{k-1})-H_1(x^{k+1},x^{k})\geqslant D_1 \|x^{k}-x^{k-1}\|^{2}.
  \end{align}
  \item The sequence $\{x^k\}$ is bounded and $\lim_{k}\|x^{k+1}-x^{k}\|=0$.
  \item Any accumulation point of $\{x^{k}\}$ is a stationary point of \eqref{primal}.
  \end{enumerate}
\end{theorem}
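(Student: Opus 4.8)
The plan is to run the familiar two-part scheme for this kind of extrapolated first-order method: first establish a sufficient-decrease inequality for the potential function $H_1$ (statement (i)), then deduce boundedness and asymptotic regularity of $\{x^k\}$ (statement (ii)), and finally pass to the limit in the subproblem's optimality condition along a convergent subsequence (statement (iii)). For (i), I would combine, at iteration $k$: (a) the descent lemma for $f$ at the extrapolated point $y^k$; (b) the $L$-strong convexity of the subproblem objective in \eqref{chap4:scheme}, which upon comparing the optimal value attained at $x^{k+1}$ with the value at the feasible point $x^k\in C$ produces an extra $\frac{L}{2}\|x^{k+1}-x^k\|^2$; and (c) convexity of $f$, i.e. $f(y^k)+\langle\nabla f(y^k),x^k-y^k\rangle\le f(x^k)$. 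Adding (a), (b), (c) and using $\|x^k-y^k\|=\beta_k\|x^k-x^{k-1}\|$ should produce
\[
f(x^{k+1})+\textstyle\sum_{i} s_i^{k+1}|x_i^{k+1}|+\tfrac{L}{2}\|x^{k+1}-x^k\|^2\le f(x^k)+\textstyle\sum_{i} s_i^{k+1}|x_i^k|+\tfrac{L}{2}\beta_k^2\|x^k-x^{k-1}\|^2.
\]
To dispose of $\sum_i s_i^{k+1}|x_i^k|$ I would use that $s_i^{k+1}=\phi'_+(|x_i^k|)$ together with the concavity of $\phi$, which gives the supergradient inequality $\phi'_+(|x_i^k|)(|x_i^k|-|x_i^{k+1}|)\le\phi(|x_i^k|)-\phi(|x_i^{k+1}|)$, hence $\sum_i s_i^{k+1}(|x_i^k|-|x_i^{k+1}|)\le\Phi(|x^k|)-\Phi(|x^{k+1}|)$. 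Substituting this, adding $\delta_C(x^k)=\delta_C(x^{k+1})=0$, and recognizing $H_1$ should give $H_1(x^{k+1},x^k)\le H_1(x^k,x^{k-1})-\frac{L}{2}(1-\beta_k^2)\|x^k-x^{k-1}\|^2$, which is \eqref{1type:dayu} with $D_1:=\frac{L}{2}(1-\sup_k\beta_k^2)>0$, positive precisely because $\sup_k\beta_k<1$. Since $H_1(x^k,x^{k-1})\ge F(x^k)\ge v>-\infty$, the nonincreasing sequence $\{H_1(x^k,x^{k-1})\}$ converges.

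For (ii), since $x^0=x^{-1}$ we have $H_1(x^0,x^{-1})=F(x^0)$, so (i) yields $F(x^k)\le H_1(x^k,x^{k-1})\le F(x^0)$, placing $\{x^k\}$ in a level set of $F$, which is bounded by the level-boundedness assumption; telescoping \eqref{1type:dayu} gives $D_1\sum_k\|x^k-x^{k-1}\|^2\le F(x^0)-v<\infty$, hence $\|x^{k+1}-x^k\|\to0$. For (iii), let $x^{k_j}\to x^*$; then $x^*\in C$ because $C$ is closed, and $\|x^{k+1}-x^k\|\to0$ forces $x^{k_j+1}\to x^*$ and $y^{k_j}\to x^*$. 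Writing the (convex) optimality condition of the subproblem, there exist $\zeta^{k+1}$ with $\zeta_i^{k+1}\in\partial|x_i^{k+1}|$ and $\eta^{k+1}\in N_C(x^{k+1})$ with $0=\nabla f(y^k)+L(x^{k+1}-y^k)+s^{k+1}\circ\zeta^{k+1}+\eta^{k+1}$. Along $k_j$: $\nabla f(y^{k_j})\to\nabla f(x^*)$ by continuity of $\nabla f$; $L(x^{k_j+1}-y^{k_j})\to0$; and $s^{k_j+1}=\Phi'_+(|x^{k_j}|)\to\Phi'_+(|x^*|)$ because $t\mapsto\phi'_+(t)$ is continuous on $[0,\infty)$ (it equals the continuous $\phi'$ on $(0,\infty)$ and $\phi'_+(0)=\ell=\lim_{t\downarrow0}\phi'(t)$ by Lemma~\ref{phiPropositions}(i)). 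Since $\{\zeta^{k_j+1}\}$ and $\{\eta^{k_j+1}\}$ are bounded, along a further subsequence they converge to $\zeta^*$ and $\eta^*$, and outer semicontinuity of $\partial|\cdot|$ and $N_C$ (subdifferentials of convex functions) gives $\zeta^*\in\partial|x^*|$, $\eta^*\in N_C(x^*)$. Thus $0\in\nabla f(x^*)+N_C(x^*)+\Phi'_+(|x^*|)\circ\partial|x^*|$, which equals $\partial F(x^*)$ by Lemma~\ref{subOfPhi}, so $x^*$ is a stationary point of \eqref{primal}.

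I expect the main obstacle to be the descent estimate in (i): one has to interlock the strong convexity of the majorized subproblem, the descent lemma plus convexity of $f$, and the concavity linearization of the nonconvex regularizer $\Phi$, all while tracking constants carefully so that the coefficient $\frac{L}{2}(1-\beta_k^2)$ remains positive — this is the only place the extrapolation parameters enter the proof of (i)–(ii), and it is exactly why $\sup_k\beta_k<1$ is imposed. A secondary delicate point is the limiting argument in (iii): identifying $\lim_j s^{k_j+1}$ requires continuity of $\phi'_+$ up to the origin (supplied by Lemma~\ref{phiPropositions}(i)), and recognizing the limit of the subproblem optimality conditions as an element of $\partial F(x^*)$ rests on outer semicontinuity together with Lemma~\ref{subOfPhi}; once these are in hand the remaining manipulations are routine.
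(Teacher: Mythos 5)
Your proposal is correct and follows essentially the same route as the paper's proof: the same chain of descent lemma at $y^k$, strong convexity of the subproblem compared at the feasible point $x^k$, convexity of $f$, and the concavity (supergradient) inequality for $\phi$ yielding \eqref{1type:dayu} with $D_1=\frac{L}{2}(1-\sup_k\beta_k^2)$, followed by the same level-boundedness and telescoping arguments for (ii) and the same limiting argument in the subproblem optimality condition (with $\lim_j s^{k_j+1}=\Phi_+'(|\tilde x|)$ via Lemma~\ref{phiPropositions}(i) and the identification $\partial F$ via Lemma~\ref{subOfPhi}) for (iii). The only point worth stating explicitly is that the boundedness of the normal-cone elements $\eta^{k_j+1}$ is not automatic but follows because they equal the negative of the remaining (bounded) terms in the optimality equation.
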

\begin{proof}
First we prove (i). We write $l_f(x;y) := f(y) + \left<\nabla f(y),x-y\right>$ for notational simplicity. Recall that $\nabla f(x)$ is Lipschitz continuous with modulus $L$. Then, we have
\begin{align*}
&F(x^{k+1})\leqslant l_f(x^{k+1};y^k)+\frac{L}{2}\|x^{k+1} -y^{k}\|^{2}+\Phi(|x^{k+1}|)\\
&\leqslant l_f(x^{k+1};y^k)+\frac{L}{2}\|x^{k+1} -y^{k}\|^{2}+\Phi(|x^{k}|)+\sum_{i=1}^{n}s^{k+1}_{i}(|x_{i}^{k+1}|-|x_i^k|),
\end{align*}
where the second inequality follows from the concavity of $\phi$ and the definition of $s^{k+1}$.
Notice that $x^{k+1}$ is the minimizer of a strongly convex objective function by its definition in \eqref{chap4:scheme}. Using this together with the above inequality, we see further that
\begin{equation}\label{ineq1}
\begin{split}
F(x^{k+1}) &\leqslant l_f(x^{k};y^k)+\frac{L}{2}\|x^{k} -y^{k}\|^{2}+\Phi(|x^{k}|)-\frac{L}{2}\|x^{k+1}-x^{k}\|^{2}\\
&\leqslant f(x^{k})+\frac{L}{2}\|x^{k} -y^{k}\|^{2}+\Phi(|x^{k}|)-\frac{L}{2}\|x^{k+1}-x^{k}\|^{2}\\
                                             &= f(x^{k})+\Phi(|x^{k}|)+\frac{L}{2}\beta_{k}^{2}\|x^{k}-x^{k-1}\|^2-\frac{L}{2}\|x^{k+1}-x^{k}\|^{2},
\end{split}
\end{equation}
where the second inequality follows from the convexity of $f$, while the equality follows from the definition of $y^k$ in \eqref{chap4:scheme}.

Rearranging \eqref{ineq1} and invoking the definition of $H_1$ and the fact that $\sup_k\beta_k < 1$, we have
\begin{equation}\label{decrease}
\begin{split}
&0 \leqslant\frac{L}{2}(1-\sup_k\beta_{k}^{2})\|x^{k}-x^{k-1}\|^{2}\leqslant \frac{L}{2}(1-\beta_{k}^{2})\|x^{k}-x^{k-1}\|^{2}\\
&\leqslant \left[F(x^{k})+\frac{L}{2}\|x^{k}-x^{k-1}\|^{2}\right]-\left[F(x^{k+1})+\frac{L}{2}\|x^{k+1}-x^{k}\|^{2}\right]\\
&= H_1(x^{k},x^{k-1})-H_1(x^{k+1},x^{k}),
\end{split}
\end{equation}
which implies that $\{H_1(x^{k},x^{k-1})\}$ is nonincreasing and \eqref{1type:dayu} holds with $D_1 = \frac{L}{2}(1-\sup_k\beta_{k}^{2}) > 0$. In addition, since $\inf F \geqslant v>-\infty$, we know that $\left\{H_1(x^{k},x^{k-1})\right\}$ is bounded from below. Thus, $\lim_{k}H_1(x^{k},x^{k-1})$ exists and (i) holds.

In addition, we have from \eqref{decrease} and the definition of $H_1$ that
\[
F(x^k)\leqslant H_1(x^{k},x^{k-1})\leqslant H_1(x^{0},x^{-1}) =F(x^{0})< \infty,
\]
Since $F$ is level-bounded, we conclude from this inequality that $\{x^k\}$ is bounded.
Moreover, summing \eqref{decrease} from $k=0$ to $\infty$, we obtain
\begin{equation*}
\frac{L}{2}\sum\limits_{k=0}^{\infty}\left[1-(\sup_k\beta_{k})^{2}\right]\|x^{k}-x^{k-1}\|^{2}\leqslant H_1(x^0,x^{-1})-\lim_{k}H_1(x^{k+1},x^k)<\infty.
\end{equation*}
Since $\sup_k\beta_{k}<1$, we have $\lim_{k}\|x^{k+1}-x^k\| = 0$. This proves (ii).

Now we prove (iii). Let $\tilde{x}$ be an accumulation point of $\{x^{k}\}$ and let $\{x^{k_{j}}\}$ be a subsequence such that $x^{k_{j}}\to \tilde{x}$. Using the first-order optimality condition of the subproblem in \eqref{chap4:scheme}, we have
\[
0\in \nabla f(y^{k_{j}})+N_C(x^{k_{j}+1})+L(x^{k_{j}+1}-y^{k_{j}})+s^{k_{j}+1}\circ\partial|x^{k_{j}+1}|;
\]
here we made use of the subdifferential calculus rules in \cite[Proposition~10.5]{RoWe09} and \cite[Proposition~10.9]{RoWe09}.
Combining this with the definition of $y^k$ in \eqref{chap4:scheme} and rearranging terms, we deduce that
\begin{align}\label{optimal_1}
-L[(x^{k_{j}+1}\!\!-x^{k_{j}})-\beta_{k_{j}}\!(x^{k_{j}}-x^{k_{j}-1})]\!\in\! \nabla f(y^{k_{j}})\!+\!N_C(x^{k_{j}+1})+s^{k_{j}+1}\!\!\circ\partial|x^{k_{j}+1}|.
\end{align}

Next, we claim that
\begin{equation}\label{slimit}
\lim_{j}s^{k_j+1}=\Phi_{+}'(|\tilde{x}|).
\end{equation}
To prove this, we first consider those $i$ corresponding to $\tilde{x}_i \neq 0$. Since $\phi$ is continuously differentiable, we have from the definition of $s^{k_j+1}$ that
$\lim_{j}s^{k_j+1}_i=\lim_{j}\phi_{+}'(|x_{i}^{k_{j}}|)=\phi_{+}'(|\tilde{x}_{i}|) $.
On the other hand, for those $i$ corresponding to $\tilde{x}_i = 0$, we have $\lim_{j}s_i^{k_j+1}=\lim_{j}\phi_{+}'(|x_{i}^{k_{j}}|)=\ell=\phi_{+}'(0)$, thanks to Lemma~\ref{phiPropositions}(i). Thus, \eqref{slimit} holds.

Now, notice that $\Phi'_+(x)\circ \partial |x| \subseteq [-\ell,\ell]^n$ for all $x\in \mathbb{R}^n$, meaning that the set-valued mapping $x\rightrightarrows\Phi'_+(x)\circ\partial|x|$ is bounded. Using this, \cite[Proposition~5.51]{RoWe09}, the closedness of convex subdifferentials, \eqref{slimit} and the fact that $\lim_{k}\|x^{k+1}-x^k\|=0$ from (ii), we see by passing to the limit in \eqref{optimal_1} that
\begin{align*}
0\in\nabla f(\tilde{x})+N_C(\tilde{x})+\Phi'_{+}(|\tilde{x}|)\circ\partial|\tilde x|=\partial F(\tilde{x}),
\end{align*}
where the last equality follows from Lemma~\ref{subOfPhi}. Thus (iii) holds.
\end{proof}

\begin{corollary}\label{constant}
  Suppose that $\sup_{k\geqslant 0}\beta_{k}<1$ and let $\{x^{k}\}$ be the sequence generated by IRL$_1e_1$ for solving \eqref{primal}. Then the set of accumulation points of $\{(x^k,x^{k-1})\}$, denoted by $\Omega_1$, is a nonempty compact subset of $\dom \partial H_1$, and $H_1\equiv \lim_kH_1(x^{k},x^{k-1})$ on $\Omega_1$.
\end{corollary}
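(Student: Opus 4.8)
The plan is to establish the three asserted properties of $\Omega_1$ --- nonemptiness and compactness, inclusion in $\dom\partial H_1$, and the constancy of $H_1$ on $\Omega_1$ --- using the results already obtained in Theorem~\ref{Type1:thm1}. First I would observe that by Theorem~\ref{Type1:thm1}(ii), the sequence $\{x^k\}$ is bounded, hence so is $\{(x^k,x^{k-1})\}$, so $\Omega_1$ is nonempty. For compactness, I would note that $\Omega_1$ is by definition the set of accumulation points of a bounded sequence, and such a set is always closed; together with boundedness this gives compactness. It will be convenient to record the simple fact, coming from $\lim_k\|x^{k+1}-x^k\| = 0$ in Theorem~\ref{Type1:thm1}(ii), that any point of $\Omega_1$ has the form $(\tilde x,\tilde x)$ where $\tilde x$ is an accumulation point of $\{x^k\}$.

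Next I would handle the inclusion $\Omega_1\subseteq\dom\partial H_1$. Fix $(\tilde x,\tilde x)\in\Omega_1$. By Theorem~\ref{Type1:thm1}(iii), $\tilde x$ is a stationary point of \eqref{primal}, so $0\in\partial F(\tilde x) = \nabla f(\tilde x)+N_C(\tilde x)+\Phi_+'(|\tilde x|)\circ\partial|\tilde x|$ by Lemma~\ref{subOfPhi}; in particular $\tilde x\in C$ and $\partial F(\tilde x)\neq\emptyset$. Since $H_1(x,y) = F(x) + \frac{L}{2}\|x-y\|^2$ and the quadratic term is smooth, the sum rule (again via \cite[Corollary~10.9]{RoWe09}, using that $F$ is regular on $C$ as shown in the proof of Lemma~\ref{subOfPhi}) gives $\partial H_1(\tilde x,\tilde x) = (\partial F(\tilde x) + L(\tilde x - \tilde x))\times\{L(\tilde x-\tilde x)\} = \partial F(\tilde x)\times\{0\}$, which is nonempty. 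Hence $(\tilde x,\tilde x)\in\dom\partial H_1$.

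Finally, for the constancy of $H_1$ on $\Omega_1$: let $c := \lim_k H_1(x^k,x^{k-1})$, which exists by Theorem~\ref{Type1:thm1}(i). Take any $(\tilde x,\tilde x)\in\Omega_1$ with subsequence $x^{k_j}\to\tilde x$; then also $x^{k_j - 1}\to\tilde x$ since $\|x^{k_j}-x^{k_j-1}\|\to 0$. I want to conclude $H_1(x^{k_j},x^{k_j-1})\to H_1(\tilde x,\tilde x)$. The quadratic term $\frac{L}{2}\|x^{k_j}-x^{k_j-1}\|^2\to 0 = \frac{L}{2}\|\tilde x-\tilde x\|^2$ and $f$ is continuous, so the only delicate point --- and the main obstacle --- is the term $\delta_C(x^{k_j})+\Phi(|x^{k_j}|) = F(x^{k_j}) - f(x^{k_j})$, because $\delta_C$ is only lower semicontinuous; lower semicontinuity alone gives $\liminf_j[\delta_C(x^{k_j})+\Phi(|x^{k_j}|)]\ge\delta_C(\tilde x)+\Phi(|\tilde x|)$ but not equality. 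To get equality I would use the sandwich provided by \eqref{ineq1}/\eqref{decrease}: from $F(x^{k_j})\le H_1(x^{k_j},x^{k_j-1})$ and the convergence of $H_1$ we get $\limsup_j F(x^{k_j})\le c$, while since all iterates lie in $C$ we have $\delta_C(x^{k_j}) = 0$ and $\Phi(|\cdot|)$ is continuous, so $\lim_j[\delta_C(x^{k_j})+\Phi(|x^{k_j}|)] = \Phi(|\tilde x|) = \delta_C(\tilde x)+\Phi(|\tilde x|)$ directly (here one uses that $x^{k_j}\in C$ for all $j$, which forces $\delta_C(x^{k_j}) = 0$, so there is in fact no lower-semicontinuity issue at all once we exploit feasibility of the iterates). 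Combining, $H_1(x^{k_j},x^{k_j-1}) = f(x^{k_j})+\Phi(|x^{k_j}|)+\frac{L}{2}\|x^{k_j}-x^{k_j-1}\|^2 \to f(\tilde x)+\Phi(|\tilde x|)+0 = H_1(\tilde x,\tilde x)$, and since the left side converges to $c$ we obtain $H_1(\tilde x,\tilde x) = c$, i.e.\ $H_1\equiv c$ on $\Omega_1$. I expect the whole argument to be short; the one spot requiring care is making explicit that feasibility of every $x^k$ removes the apparent obstruction from the indicator function.
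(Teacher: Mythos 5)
Your proposal is correct and follows essentially the same route as the paper's proof: nonemptiness and compactness from the boundedness of $\{x^k\}$, the identification $\Omega_1=\{(\tilde x,\tilde x)\}$ from $\|x^{k+1}-x^k\|\to 0$, the inclusion in $\dom\partial H_1$ via stationarity of the accumulation points, and constancy of $H_1$ from the continuity of $F$ on $C$ together with the convergence of $\{H_1(x^k,x^{k-1})\}$. You merely spell out two steps the paper leaves as "routine" (the computation $\partial H_1(\tilde x,\tilde x)=\partial F(\tilde x)\times\{0\}$ and the observation that feasibility of the iterates neutralizes the indicator function), which is harmless.
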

\begin{proof}
 From Theorem~\ref{Type1:thm1}(ii), we know that the set of accumulation points of $\{x^k\}$, denoted by $\Lambda_1$, is nonempty and compact. Moreover, since $\lim_{k}\|x^{k+1}-x^k\|=0$, we see that $\Omega_1 = \{(x,x):\; x\in \Lambda_1\}$, which is clearly nonempty and compact. Furthermore, since $\Lambda_1$ belongs to $\{x:\; 0\in \partial F(x)\}\subseteq \dom \partial F$ according to Theorem~\ref{Type1:thm1}(iii), it is routine to check that $\Omega_1\subset \dom \partial H_1$.

Next, choose any $(\tilde x,\tilde x)\in \Omega_1$ and let $\{x^{k_j}\}$ be a subsequence of $\{x^{k}\}$ with $x^{k_j}\to\tilde{x}$. Then
\begin{equation*}
    H_1(\tilde{x},\tilde{x})=F(\tilde{x}) = \lim_{j}F(x^{k_j})+\frac{L}{2}\|x^{k_j}-x^{k_j-1}\|^2= \lim_{j}H_1(x^{k_j},x^{k_j-1}),
\end{equation*}
where the second equality follows from the continuity of $F$ on $C$ and Theorem \ref{Type1:thm1}(ii). Since $\{H_1(x^{k},x^{k-1})\}$ is convergent thanks to Theorem \ref{Type1:thm1}(i) and $(\tilde x,\tilde x)\in \Omega_1$ is chosen arbitrarily, we obtain that $H_1\equiv \lim_kH_1(x^{k},x^{k-1})$ on $\Omega_1$. This completes the proof.
\end{proof}

Next, we prove under additional assumptions on $H_1$ and $\phi_+'$ that the whole sequence $\{x^k\}$ generated by IRL$_1e_1$ is convergent to a stationary point of \eqref{primal}. We start with an auxiliary lemma.
\begin{lemma}\label{lemma1}
Suppose that $\sup_{k\geqslant 0}\beta_{k}<1$ and that $\phi_{+}'$ is Lipschitz continuous on $[0,\infty)$. Let $\{x^{k}\}$ be the sequence generated by IRL$_1e_1$ for solving \eqref{primal}. Then there exists a positive constant $C_1$ such that for all $k\geqslant1$,
\begin{equation*}
  {\rm dist}\left((0,0),\partial H_1(x^{k},x^{k-1})\right)\leqslant C_1(\|x^{k-1}-x^{k-2}\|+\|x^{k}-x^{k-1}\|).
\end{equation*}
\end{lemma}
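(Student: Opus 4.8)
The plan is to first compute $\partial H_1$ explicitly, then to exhibit one concrete element of $\partial H_1(x^{k},x^{k-1})$ manufactured from the first-order optimality condition of the subproblem in \eqref{chap4:scheme}, and finally to bound its norm using the Lipschitz continuity of $\nabla f$ together with the new hypothesis that $\phi_{+}'$ is Lipschitz.

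\emph{Step 1: the subdifferential of $H_1$.} Write $H_1(x,y) = \bigl[f(x)+\tfrac{L}{2}\|x-y\|^{2}\bigr] + \delta_C(x)+\Phi(|x|)$. The bracketed term is continuously differentiable jointly in $(x,y)$, while $\delta_C(x)+\Phi(|x|)$ depends only on $x$ and is regular (this was established in the proof of Lemma~\ref{subOfPhi}, where it is shown that $\Phi(|\cdot|)$ is amenable, hence regular, so that $\partial(\delta_C+\Phi(|\cdot|))(x) = N_C(x)+\Phi_{+}'(|x|)\circ\partial|x|$). Hence, by the sum rule for a $C^1$ perturbation \cite[Exercise~8.8]{RoWe09} and the product structure, for any $x\in C$,
\[
\partial H_1(x,y) = \Bigl(\nabla f(x)+L(x-y)+N_C(x)+\Phi_{+}'(|x|)\circ\partial|x|\Bigr)\times\{L(y-x)\}.
\]

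\emph{Step 2: constructing an element.} Relabelling the optimality condition \eqref{optimal_1} (with $k_j+1$ replaced by $k$), there exist a subgradient selection $\xi^{k}\in\partial|x^{k}|$ and a vector $w^{k}\in N_C(x^{k})$ with
\[
w^{k} = -\nabla f(y^{k-1}) - L(x^{k}-y^{k-1}) - s^{k}\circ\xi^{k},
\]
where $s^{k}=\Phi_{+}'(|x^{k-1}|)$ and $y^{k-1}=x^{k-1}+\beta_{k-1}(x^{k-1}-x^{k-2})$; this uses $x^{0}=x^{-1}$ when $k=1$. Using the \emph{same} $w^{k}$ and $\xi^{k}$, set
\[
u^{k}:=\nabla f(x^{k})+L(x^{k}-x^{k-1})+w^{k}+\Phi_{+}'(|x^{k}|)\circ\xi^{k},
\]
so that, by Step~1, $\bigl(u^{k},\,L(x^{k-1}-x^{k})\bigr)\in\partial H_1(x^{k},x^{k-1})$.

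\emph{Step 3: norm estimate.} Substituting $w^{k}$ and grouping terms,
\[
u^{k}=\bigl[\nabla f(x^{k})-\nabla f(y^{k-1})\bigr]+L\beta_{k-1}(x^{k-1}-x^{k-2})+\bigl(\Phi_{+}'(|x^{k}|)-\Phi_{+}'(|x^{k-1}|)\bigr)\circ\xi^{k},
\]
using $-L(x^{k}-y^{k-1})+L(x^{k}-x^{k-1})=L(y^{k-1}-x^{k-1})=L\beta_{k-1}(x^{k-1}-x^{k-2})$ and $s^{k}=\Phi_{+}'(|x^{k-1}|)$. Now bound the three pieces: by $L$-Lipschitz continuity of $\nabla f$ and $\beta_{k-1}<1$, $\|\nabla f(x^{k})-\nabla f(y^{k-1})\|\leqslant L\|x^{k}-y^{k-1}\|\leqslant L\|x^{k}-x^{k-1}\|+L\|x^{k-1}-x^{k-2}\|$; trivially $\|L\beta_{k-1}(x^{k-1}-x^{k-2})\|\leqslant L\|x^{k-1}-x^{k-2}\|$; and since $\|\xi^{k}\|_{\infty}\leqslant1$ and $\phi_{+}'$ is Lipschitz on $[0,\infty)$ with some modulus $L_\phi$, $\|(\Phi_{+}'(|x^{k}|)-\Phi_{+}'(|x^{k-1}|))\circ\xi^{k}\|\leqslant\|\Phi_{+}'(|x^{k}|)-\Phi_{+}'(|x^{k-1}|)\|\leqslant L_\phi\|x^{k}-x^{k-1}\|$. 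Together with $\|L(x^{k-1}-x^{k})\|=L\|x^{k}-x^{k-1}\|$, the triangle inequality gives
\[
{\rm dist}\bigl((0,0),\partial H_1(x^{k},x^{k-1})\bigr)\leqslant\|u^{k}\|+L\|x^{k}-x^{k-1}\|\leqslant (3L+L_\phi)\bigl(\|x^{k-1}-x^{k-2}\|+\|x^{k}-x^{k-1}\|\bigr),
\]
so the claim holds with $C_1=3L+L_\phi$.

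\emph{Main obstacle.} The only substantive point is the mismatch between the weights $s^{k}=\Phi_{+}'(|x^{k-1}|)$ that enter the $(k{-}1)$-st subproblem and the weights $\Phi_{+}'(|x^{k}|)$ appearing in $\partial H_1(x^{k},x^{k-1})$; controlling the difference $\Phi_{+}'(|x^{k}|)-\Phi_{+}'(|x^{k-1}|)$ is exactly what forces the new hypothesis that $\phi_{+}'$ be Lipschitz. Everything else is routine: one need only be careful to reuse the same selections $\xi^{k}\in\partial|x^{k}|$ and $w^{k}\in N_C(x^{k})$ in both the optimality condition and the constructed element of $\partial H_1$.
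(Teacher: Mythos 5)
Your proposal is correct and follows essentially the same route as the paper's proof: both exhibit an element of $\partial H_1(x^{k},x^{k-1})$ by reusing the selections $\xi^{k}\in\partial|x^{k}|$ and the normal-cone vector from the optimality condition of the subproblem, and both isolate the difference $\Phi_{+}'(|x^{k}|)-\Phi_{+}'(|x^{k-1}|)$ as the term requiring the Lipschitz hypothesis on $\phi_{+}'$. The only differences are cosmetic (you bound the distance by the sum of the two component norms rather than the Euclidean norm of the pair, yielding a slightly different but equally valid constant).
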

\begin{proof}
  First, using the first-order optimality condition of the subproblem in \eqref{chap4:scheme} and the definition of $s_{i}^{k}$,
  there exist a $\xi^{k}\in \partial|x^{k}|$ and a $\zeta^k\in N_C(x^k)$ such that
  \begin{align}\label{eqzero}
  0= \nabla f(y^{k-1})+\zeta^k+\Phi'_{+}(|x^{k-1}|)\circ\xi^{k}+L(x^{k}-y^{k-1})
  \end{align}
  for all $k\geqslant 1$.
  Define $\eta^k:= \nabla f(x^{k})+\zeta^k+\Phi'_{+}(|x^{k}|)\circ\xi^{k} +L(x^{k}-x^{k-1})$.
  Then we have
\begin{equation*}
  \begin{split}
  &\ \ \ \ (\eta^k,-L(x^{k}-x^{k-1}))\\
  &\in
  \begin{pmatrix}
    \nabla f(x^{k})+ N_C(x^k)+\Phi'_{+}(|x^{k}|)\circ\partial|x^{k}| +L(x^{k}-x^{k-1})\\
    \{-L(x^{k}-x^{k-1})\}
  \end{pmatrix}\\
  &=\partial H_1(x^{k},x^{k-1}).
  \end{split}
  \end{equation*}
  where the equality follows from \cite[Exercise~8.8]{RoWe09}, \cite[Proposition~10.5]{RoWe09} and Lemma \ref{subOfPhi}. Consequently, we have for all $k\geqslant 0$ that
  \begin{equation}\label{eqhehehaha}
    {\rm dist}\left((0,0),\partial H_1(x^{k},x^{k-1})\right)\leqslant \sqrt{\|\eta^k\|^{2}+L^{2}\|x^{k}-x^{k-1}\|^{2}}.
  \end{equation}

On the other hand, from the definition of $\eta^k$ and \eqref{eqzero}, we see that
 \begin{equation}\label{eqhahahehe}
  \begin{split}
  &\|\eta^k\|=\left\|\eta^k-\left[\nabla f(y^{k-1})+  \zeta^k+\Phi'_{+}(|x^{k-1}|)\circ\xi^{k}+L(x^{k}-y^{k-1})\right]\right\|\\
  &=\left\|\nabla f(x^{k})-\nabla f(y^{k-1}) -L(x^{k-1}-y^{k-1})+\left[\Phi'_{+}(|x^{k}|)-\Phi'_{+}(|x^{k-1}|)\right]\circ\xi^{k}\right\|\\
  &\leqslant \|\nabla f(x^{k})-\nabla f(y^{k-1})\| + L\|x^{k-1}-y^{k-1}\|+\|\Phi'_{+}(|x^{k}|)-\Phi'_{+}(|x^{k-1}|)\|\\
  &\leqslant \|\nabla f(x^{k})-\nabla f(y^{k-1})\| + L\|x^{k-1}-y^{k-1}\|+\sqrt{\sum_{i=1}^n\rho^2(|x^{k}_{i}|-|x^{k-1}_{i}|)^2}\\
  &\leqslant L\|x^k - y^{k-1}\| + L\|x^{k-1}-y^{k-1}\|+\rho\|x^{k}-x^{k-1} \|\\
  & \leqslant (L+\rho) \|x^{k}-x^{k-1} \| + 2L\|x^{k-1}-x^{k-2}\|,
  \end{split}
  \end{equation}
where the first inequality follows from the elementary inequality $\|a\circ b\|\le \|b\|_\infty\|a\|$ for any $a$, $b\in \mathbb{R}^n$ and the fact that $\|\xi^k\|_\infty\le 1$ since $\xi^{k}\in \partial|x^{k}|$, the second inequality follows from the Lipschitz continuity of $\phi'_+$ (with modulus $\rho$); the third inequality holds because $\nabla f$ is Lipschitz continuous; and we made use of the definition of $y^k$ and the fact that $\{\beta_k\}\subset [0,1)$ for the last inequality.
The desired conclusion now follows immediately from \eqref{eqhehehaha} and \eqref{eqhahahehe}.
\end{proof}

We are now ready to prove convergence of the whole sequence $\{x^k\}$ generated by IRL$_1e_1$ under suitable assumptions. Our proof is similar to standard convergence arguments making use of KL property; see, for example, \cite{AtBoReSo10,AtBoSv13}. We include the proof for completeness.

\begin{theorem}\label{thm2}
Suppose that $\sup_{k\geqslant 0}\beta_{k}<1$ and that $\phi_{+}'$ is Lipschitz continuous on $[0,\infty)$. Suppose in addition that $H_1$ is a KL function. Let $\{x^{k}\}$ be the sequence generated by IRL$_1e_1$ for solving \eqref{primal}.  Then $\sum_{k=1}^{\infty}\|x^{k}-x^{k-1}\|<\infty$ and $\{x^{k}\}$ converges to a stationary point of the problem \eqref{primal}.
\end{theorem}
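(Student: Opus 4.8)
The plan is to follow the by-now-standard KL-based convergence scheme (as in \cite{AtBoReSo10,AtBoSv13,BoSaTe14}) applied to the auxiliary potential function $H_1$, using the three ingredients already assembled: the sufficient-decrease inequality \eqref{1type:dayu} from Theorem~\ref{Type1:thm1}(i), the relative-error (subgradient) bound from Lemma~\ref{lemma1}, and the continuity/compactness facts from Corollary~\ref{constant}. First I would set $w^k := (x^k,x^{k-1})$, note $\Omega_1$ is nonempty compact by Corollary~\ref{constant} with $H_1$ constant on it, and dispose of the trivial case: if $H_1(x^{k_0},x^{k_0-1}) = \lim_k H_1(x^k,x^{k-1})$ for some finite $k_0$, then by \eqref{1type:dayu} and monotonicity $x^k = x^{k-1}$ for all $k\ge k_0$, so the sequence is eventually constant and the conclusion is immediate (combined with Theorem~\ref{Type1:thm1}(iii) for stationarity). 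Otherwise we may assume $H_1(x^k,x^{k-1}) > \lim_k H_1 =: \zeta_1$ strictly for all $k$.

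Next I would invoke the uniformized KL property (Lemma~\ref{UKL}) with $h = H_1$, $\Gamma = \Omega_1$: there exist $\epsilon, a > 0$ and $\varphi\in\Xi_a$ such that $\varphi'(H_1(w)-\zeta_1)\,{\rm dist}(0,\partial H_1(w)) \ge 1$ for all $w$ with ${\rm dist}(w,\Omega_1) < \epsilon$ and $\zeta_1 < H_1(w) < \zeta_1 + a$. Since $w^k \to \Omega_1$ (because $\|x^{k+1}-x^k\|\to 0$ and the accumulation set is $\Omega_1$) and $H_1(w^k)\downarrow\zeta_1$, there is $K$ such that both conditions hold for all $k\ge K$. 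Then combine: concavity of $\varphi$ gives $\varphi(H_1(w^k)-\zeta_1) - \varphi(H_1(w^{k+1})-\zeta_1) \ge \varphi'(H_1(w^k)-\zeta_1)\big(H_1(w^k)-H_1(w^{k+1})\big) \ge \frac{H_1(w^k)-H_1(w^{k+1})}{{\rm dist}(0,\partial H_1(w^k))}$. Bounding the denominator above by Lemma~\ref{lemma1} (via $C_1(\|x^{k-1}-x^{k-2}\| + \|x^k-x^{k-1}\|)$) and the numerator below by $D_1\|x^k-x^{k-1}\|^2$ from \eqref{1type:dayu}, writing $\Delta_k := \|x^k - x^{k-1}\|$ and $\delta_k := \varphi(H_1(w^k)-\zeta_1)$, I get an inequality of the form $\Delta_k^2 \le \frac{C_1}{D_1}(\delta_k - \delta_{k+1})(\Delta_{k-1}+\Delta_k)$. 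Applying the AM–GM inequality $2\sqrt{uv}\le u+v$ with $u = \tfrac{C_1}{D_1}(\delta_k-\delta_{k+1})$ and $v$ a suitable multiple of $\Delta_{k-1}+\Delta_k$ yields $2\Delta_k \le \tfrac12(\Delta_{k-1}+\Delta_k) + (\text{const})(\delta_k-\delta_{k+1})$, hence $\Delta_k \le \Delta_{k-1} - \Delta_k + \tfrac{3}{2}\Delta_{k-1} - \dots$; more cleanly, rearranging gives $\tfrac32\Delta_k \le \tfrac12\Delta_{k-1} + (\text{const})(\delta_k - \delta_{k+1})$ type bound, so that $\Delta_k - \Delta_{k-1} \le -\tfrac13\Delta_{k-1} + \dots$, and telescoping the $\delta_k$ terms (which is legitimate since $\delta_k\ge 0$ is nonincreasing, hence convergent) proves $\sum_{k} \Delta_k < \infty$.

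Finiteness of $\sum_k \|x^k - x^{k-1}\|$ makes $\{x^k\}$ a Cauchy sequence, hence convergent to some $x^*$; and by Theorem~\ref{Type1:thm1}(iii), $x^*$ — being an accumulation point — is a stationary point of \eqref{primal}.

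The main obstacle I anticipate is bookkeeping in the telescoping step: one must be careful that the relative-error bound in Lemma~\ref{lemma1} involves \emph{two} consecutive differences $\Delta_{k-1}$ and $\Delta_k$ (an artifact of the extrapolation), so the resulting recursion couples three consecutive terms $\Delta_{k-1},\Delta_k$ (and, after squaring/AM–GM, must be summed carefully so the "bad" term $\Delta_{k-1}$ on the right is absorbed). The standard fix is to sum the inequality $2\Delta_k \le \Delta_{k-1} + (\text{const})(\delta_k - \delta_{k+1})$ over $k = K,\dots,N$: the left side gives $2\sum_{k=K}^N \Delta_k$, the first right-hand term gives $\sum_{k=K}^N \Delta_{k-1} = \sum_{k=K-1}^{N-1}\Delta_k \le \Delta_{K-1} + \sum_{k=K}^N \Delta_k$, and the telescoped term is bounded by $(\text{const})\,\delta_K < \infty$; rearranging gives $\sum_{k=K}^N \Delta_k \le \Delta_{K-1} + (\text{const})\,\delta_K$, uniformly in $N$. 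A secondary (minor) point is justifying ${\rm dist}(0,\partial H_1(w^k)) \ne 0$ so that dividing is valid — this holds because in the nontrivial case $H_1(w^k) > \zeta_1$ means $w^k\notin\Omega_1$ is not a stationary point of $H_1$, but it is cleaner to simply note that if the distance were $0$ the KL inequality would fail, or to carry the bound multiplicatively to avoid division altogether.
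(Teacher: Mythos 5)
Your proposal is correct and follows essentially the same route as the paper's proof: dispose of the finite-convergence case, apply the uniformized KL property to $H_1$ on $\Omega_1$, combine the sufficient decrease \eqref{1type:dayu} with the subgradient bound of Lemma~\ref{lemma1} and the concavity of $\varphi$, and absorb the shifted $\Delta_{k-1}$ term after summing. The AM--GM step you use is the same as the paper's $4ab\leqslant(a+b)^2$ manipulation, and your remark about carrying the KL inequality multiplicatively to avoid dividing by ${\rm dist}(0,\partial H_1)$ is exactly how the paper handles it.
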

\begin{proof}
In view of Theorem~\ref{Type1:thm1}(iii), it suffices to show that $\sum_{k=1}^{\infty}\|x^{k}-x^{k-1}\|<\infty$ (which implies convergence of $\{x^k\}$).
To this end, note first from Theorem~\ref{Type1:thm1}(i) that $w_1:= \lim_{k}H_1(x^{k},x^{k-1})$ exists.
If there exists $k'$ such that $H_1(x^{k'},x^{k'-1})=w_1$, then for all $k\geqslant k'$, we must have $H_1(x^{k},x^{k-1})=H_1(x^{k'},x^{k'-1})=w_1$, thanks to the fact that $\{H_1(x^{k},x^{k-1})\}$ is nonincreasing by Theorem~\ref{Type1:thm1}(i). Combining this with \eqref{1type:dayu}, we obtain that $x^{k}=x^{k'}$ when $k\geqslant k'$, i.e. the sequence generated converges finitely and thus the conclusion of this theorem holds in this case. Thus, from now on, we assume that $H_1(x^{k},x^{k-1})>w_1$ for all $k$.

According to Corollary~\ref{constant}, $H_1$ is constant (which equals $w_1$) on the nonempty compact set $\Omega_1\subseteq \dom \partial H$, where $\Omega_1$ is the set of accumulation points of $\{(x^k,x^{k-1})\}$. This together with the assumption that $H_1$ is a KL function and Lemma \ref{UKL} implies that there exist $\epsilon_1$, $\eta_1 >0$ and $\varphi_1\in \Xi_{\eta_1}$ such that
\[
\varphi_1'\left(H_1(x,y)-w_1\right) {\rm dist}\left(0,\partial H_1(x,y)\right)\geqslant 1
\]
for any $(x,y)$ satisfying ${\rm dist}((x,y),\Omega_1)<\epsilon_1$ and $w_1<H_1(x,y)<w_1+\eta_1$.
In addition, since $\{x^k\}$ is bounded according to Theorem~\ref{Type1:thm1}(ii), there exists $k_0$ such that whenever $k\geqslant k_0$, we have
\[
{\rm dist}((x^k,x^{k-1}),\Omega_1)<\epsilon_1.
\]
Furthermore, from the definition of $w_1$ and the assumption that $H_1(x^{k},x^{k-1})>w_1$ for all $k$, we know there exists a $k_1$ such that such that whenever $k\geqslant k_1$, $w_1<H_1(x^k,x^{k-1})<w_1+\eta_1$.
Let $N=\max\left\{k_0,k_1\right\}$. Then for $k>N$, we have
\[
\varphi_1'\left(H_1(x^k,x^{k-1})-w_1\right) {\rm dist}\left(0,\partial H_1(x^k,x^{k-1})\right)\geqslant 1.
\]
Combining this with the concavity of $\varphi_1$ we have
\begin{equation*}
  \begin{split}
   & [\underbrace{\varphi_1\left(H_1(x^k,x^{k-1})-w_1\right)-\varphi_1\left(H_1(x^{k+1},x^{k})-w_1\right)}_{\Delta_k}]\cdot{\rm dist}\left(0,\partial H_1(x^k,x^{k-1})\right)\\
   &\geqslant  \varphi_1'\left(H_1(x^k,x^{k-1})-w_1\right)\cdot {\rm dist}\left(0,\partial H_1(x^k,x^{k-1})\right)\cdot\left(H_1(x^k,x^{k-1})-H_1(x^{k+1},x^{k})\right)\\
   & \geqslant H_1(x^k,x^{k-1})-H_1(x^{k+1},x^{k})\geqslant D_1 \|x^{k}-x^{k-1}\|^{2},
  \end{split}
\end{equation*}
where the last inequality follows from  \eqref{1type:dayu}.
Using Lemma~\ref{lemma1} to upper bound the term ${\rm dist}\left(0,\partial H_1(x^k,x^{k-1})\right)$ in the above relation, we further deduce that
\begin{equation*}
\begin{split}
\|x^{k}-x^{k-1}\|^{2}&\leqslant \frac{4C_1}{D_1}\Delta_k\cdot\frac{1}{4}\left(\|x^{k-1}-x^{k-2}\|+\|x^{k}-x^{k-1}\|\right)\\
&\leqslant \left[\frac{C_1}{D_1}\Delta_k+\frac{1}{4}\left(\|x^{k-1}-x^{k-2}\|+\|x^{k}-x^{k-1}\|\right)\right]^2,
\end{split}
\end{equation*}
where the second inequality follows the relation $4ab\leqslant (a+b)^2$ for $a$, $b\in \mathbb{R}$. Taking square root on both sides of the above inequality and rearranging terms, we have
\begin{equation*}
\frac{1}{2}\|x^{k}-x^{k-1}\|\leqslant\frac{C_1}{D_1}\Delta_k+\frac{1}{4}\left(\|x^{k-1}-x^{k-2}\|-\|x^{k}-x^{k-1}\|\right).
\end{equation*}
Summing this inequality from $k = N+1$ to $\infty$, we obtain that
\begin{equation*}
\frac{1}{2}\sum\limits_{k=N+1}^{\infty}\|x^{k}-x^{k-1}\|\leqslant\frac{C_1}{D_1}\left(\varphi_1\left(H_1(x^{N+1},x^N)-w_1\right)\right)+\frac{1}{4}\left(\|x^{N}-x^{N-1}\|\right)< \infty,
\end{equation*}
which also implies the convergence of $\{x^k\}$. This completes the proof.
\end{proof}

\section{Iteratively reweighted $\ell_1$ algorithm with type-II extrapolation}\label{sec4}

In this section, we propose and analyze another version of iteratively reweighted $\ell_1$ algorithm with an extrapolation technique motivated from the method by Auslender and Teboulle \cite{AuTe06}: this was described as the second APG method in \cite{Tse10} and was shown empirically to be the most efficient optimal first-order method in the numerical experiments of \cite{BeCaGr11}. We call the algorithm based on this extrapolation technique the iteratively reweighted $\ell_1$ algorithm with type-II extrapolation (IRL$_1e_2$). This method is presented as Algorithm~\ref{alg2} below.

\begin{algorithm}
\caption{\bf Iteratively reweighted $\ell_1$ algorithm with type-II extrapolation (IRL$_1e_2$)}\label{alg2}
\begin{algorithmic}
\item[Step 0.] Input initial points $x^{0}, z^0\in  C$ and a sequence $\{\theta_k\}\subset (0,1]$. Set $k = 0$.

\item[Step 1.]Set
\begin{equation}\label{chap:2thExtraScheme}
  \begin{cases}
  &s^{k+1}=\Phi'_{+}(|x^k|);\\
  &y^k=(1-\theta_{k})x^k+\theta_{k}z^k;\\
&\displaystyle z^{k+1}= \argmin\limits_{x\in C}\left\{\left<\nabla f(y^k),x-y^k\right> +\frac{L\theta_{k}}{2}\|x-z^{k}\|^{2}+\sum\limits_{i=1}^n s_{i}^{k+1}|x_{i}|\right\};\\
&x^{k+1}=(1-\theta_{k})x^k+\theta_{k}z^{k+1}.
\end{cases}
\end{equation}

\item[Step 2.] If a termination criterion is not met, set $k = k+1$ and go to Step 1.
\end{algorithmic}
\end{algorithm}

We will show that any accumulation point of the sequence $\{z^k\}$ generated by IRL$_1e_2$ is a stationary point of $F$ under suitable assumptions. Our convergence arguments also make use of $H_1$ defined in \eqref{H_1}, and are inspired by \cite[Appendix~A]{Tse10}. In our analysis below, the parameters $\{\theta_k\}$ in IRL$_1e_2$ have to satisfy \eqref{alpha}. We will demonstrate in Section~\ref{sec6} how such $\{\theta_k\}$ can be chosen in our numerical experiments.

\begin{theorem}\label{Extra2thm1}
  Suppose that the $\{\theta_k\}$ in IRL$_1e_2$ is chosen so that
  \begin{equation}\label{alpha}
  \sup_{k\geqslant 1}\{\theta_{k}^2(1-\theta_{k-1})^2 - \theta_{k-1}^2\} < 0,
  \end{equation}
  and let $\{x^k,y^k,z^k\}$ be the sequences generated by IRL$_1e_2$ for solving \eqref{primal}.
  Then the following statements hold.
\begin{enumerate}[{\rm (i)}]
  \item $\{H_1(x^{k},x^{k-1})\}_{k\geqslant1}$ is a nonincreasing convergent sequence.
  \item It holds that
  \begin{equation}\label{allGoTo0}
\lim_k\max\{\|z^{k+1}-x^{k}\|,\|z^{k+1}-y^k\|,\|z^{k+1}-z^k\|\}=0.
  \end{equation}
  \item The sequence $\{z^k\}$ is bounded.
  \item Any accumulation point of $\{z^{k}\}$ is a stationary point of \eqref{primal}.
\end{enumerate}
\end{theorem}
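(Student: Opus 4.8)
The plan is to follow the structure of the proof of Theorem~\ref{Type1:thm1}, using the same auxiliary function $H_1$ in \eqref{H_1}, but adapting the bookkeeping to the three-point scheme \eqref{chap:2thExtraScheme}, much as in \cite[Appendix~A]{Tse10}. First I would record some elementary facts. Since $x^0,z^0\in C$, $\theta_k\in(0,1]$ and each $z^{k+1}$ minimizes a strongly convex function over $C$, an induction gives $z^k\in C$ for $k\ge1$ and $x^k,y^k\in C$ for all $k$. From \eqref{chap:2thExtraScheme} one also has
\[
x^{k+1}-x^k=\theta_k(z^{k+1}-x^k),\qquad x^{k+1}-y^k=\theta_k(z^{k+1}-z^k),\qquad y^k-x^k=\theta_k(z^k-x^k),
\]
and, for $k\ge1$, since $x^k=(1-\theta_{k-1})x^{k-1}+\theta_{k-1}z^k$, also $z^k-x^k=\frac{1-\theta_{k-1}}{\theta_{k-1}}(x^k-x^{k-1})$.

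The core of the argument is the descent estimate, valid for $k\ge1$,
\[
H_1(x^{k+1},x^k)\le F(x^k)+\frac{L}{2}\,\frac{\theta_k^2(1-\theta_{k-1})^2}{\theta_{k-1}^2}\,\|x^k-x^{k-1}\|^2.
\]
To obtain it I would start from the descent lemma $F(x^{k+1})\le l_f(x^{k+1};y^k)+\frac{L}{2}\|x^{k+1}-y^k\|^2+\Phi(|x^{k+1}|)$ (with $l_f$ as in the proof of Theorem~\ref{Type1:thm1}), bound $\Phi(|x^{k+1}|)\le\Phi(|x^k|)+\langle s^{k+1},|x^{k+1}|-|x^k|\rangle$ by the concavity of $\phi$ and the definition of $s^{k+1}$, then use $x^{k+1}=(1-\theta_k)x^k+\theta_kz^{k+1}$ and the convexity of $x\mapsto l_f(x;y^k)+\langle s^{k+1},|x|\rangle$ to split the right-hand side into an $x^k$-part and a $z^{k+1}$-part. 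Bounding the $z^{k+1}$-part via the strong convexity (modulus $L\theta_k$) of the subproblem that defines $z^{k+1}$, evaluated at the feasible test point $x^k\in C$, and using $\|x^{k+1}-y^k\|^2=\theta_k^2\|z^{k+1}-z^k\|^2$, the terms involving $\|z^{k+1}-z^k\|^2$ cancel; what remains, after replacing $l_f(x^k;y^k)$ by $f(x^k)$ via convexity of $f$ and collapsing the linear terms, is $F(x^{k+1})\le F(x^k)+\frac{L\theta_k^2}{2}(\|x^k-z^k\|^2-\|x^k-z^{k+1}\|^2)$. Adding $\frac{L}{2}\|x^{k+1}-x^k\|^2=\frac{L\theta_k^2}{2}\|z^{k+1}-x^k\|^2$ to both sides turns the left side into $H_1(x^{k+1},x^k)$ and cancels the $\|x^k-z^{k+1}\|^2$ term on the right, while $\frac{L\theta_k^2}{2}\|x^k-z^k\|^2$ equals the displayed bound by the last identity above. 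Finally, \eqref{alpha} supplies a constant $c>0$ with $\theta_k^2(1-\theta_{k-1})^2\le\theta_{k-1}^2-c$ for all $k\ge1$; as $\theta_{k-1}\le1$, this gives $\theta_k^2(1-\theta_{k-1})^2/\theta_{k-1}^2\le1-c$, hence $H_1(x^{k+1},x^k)\le H_1(x^k,x^{k-1})-\frac{Lc}{2}\|x^k-x^{k-1}\|^2$. Since $H_1\ge v>-\infty$, the nonincreasing sequence $\{H_1(x^k,x^{k-1})\}_{k\ge1}$ converges, which is (i); telescoping also gives $\sum_{k\ge1}\|x^k-x^{k-1}\|^2<\infty$, so $\|x^k-x^{k-1}\|\to0$.

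For (ii) and (iii): condition \eqref{alpha} also forces $\theta_{k-1}^2\ge c+\theta_k^2(1-\theta_{k-1})^2\ge c$, i.e. $\theta_k\ge\sqrt c>0$ for every $k$. Dividing the three identities in the first paragraph by $\theta_k$ and using $\|x^k-x^{k-1}\|\to0$ shows that $\|z^{k+1}-x^k\|$, $\|z^{k+1}-z^k\|$ and $\|z^{k+1}-y^k\|$ are each at most a fixed multiple of $\|x^{k+1}-x^k\|+\|x^k-x^{k-1}\|$, which yields \eqref{allGoTo0}; in particular $z^k-x^k\to0$. Moreover $H_1(x^k,x^{k-1})\le H_1(x^1,x^0)<\infty$ bounds $F(x^k)$ from above, so $\{x^k\}$ is bounded by the level-boundedness of $F$, and then $\{z^k\}$ is bounded because $z^k-x^k\to0$. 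For (iv) I would mimic the proof of Theorem~\ref{Type1:thm1}(iii): given a subsequence $z^{k_j}\to\tilde z$, \eqref{allGoTo0} gives $y^{k_j-1}\to\tilde z$, $x^{k_j-1}\to\tilde z$ and $\|z^{k_j}-z^{k_j-1}\|\to0$, so rewriting the first-order optimality condition for $z^{k_j}$ as
\[
-L\theta_{k_j-1}(z^{k_j}-z^{k_j-1})\in\nabla f(y^{k_j-1})+N_C(z^{k_j})+s^{k_j}\circ\partial|z^{k_j}|
\]
and passing to the limit --- using that $s^{k_j}=\Phi'_{+}(|x^{k_j-1}|)\to\Phi'_{+}(|\tilde z|)$ (invoking Lemma~\ref{phiPropositions}(i) for the indices with $\tilde z_i=0$), that $x\rightrightarrows\Phi'_{+}(|x|)\circ\partial|x|$ is bounded and outer semicontinuous, and that the graph of $N_C$ is closed --- yields $0\in\nabla f(\tilde z)+N_C(\tilde z)+\Phi'_{+}(|\tilde z|)\circ\partial|\tilde z|=\partial F(\tilde z)$ by Lemma~\ref{subOfPhi}.

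I expect the descent estimate in (i) to be the main obstacle. Unlike in IRL$_1e_1$, the iterate $x^{k+1}$ is not itself the minimizer of a subproblem, so one must exploit the interpolation $x^{k+1}=(1-\theta_k)x^k+\theta_kz^{k+1}$ together with the strong convexity of the $z$-subproblem at the test point $x^k$, and then track the squared-norm terms carefully so that exactly the combination with coefficient $\theta_k^2(1-\theta_{k-1})^2/\theta_{k-1}^2$ survives --- this being precisely the quantity that \eqref{alpha} is designed to keep uniformly below $1$.
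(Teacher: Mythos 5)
Your proposal is correct and follows essentially the same route as the paper: the same descent estimate for $H_1$ obtained from the descent lemma, the concavity of $\phi$, the convex-combination splitting along $x^{k+1}=(1-\theta_k)x^k+\theta_k z^{k+1}$, the strong convexity of the $z$-subproblem tested at the feasible point $x^k$, and the same limiting argument for part (iv). The only cosmetic difference is in part (ii): the paper telescopes $\|x^{k-1}-z^k\|^2$ directly from the descent inequality, while you telescope $\|x^k-x^{k-1}\|^2=\theta_{k-1}^2\|x^{k-1}-z^k\|^2$ and then recover the $z$-increments via the (correct) observation that \eqref{alpha} forces $\inf_k\theta_k>0$.
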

%
\begin{proof}
  In this proof, we write $l_f(x;y):= f(y)+\left<\nabla f(y),x-y\right>$ for notational simplicity.
  Since $\nabla f$ is Lipschitz continuous with modulus $L > 0$, we have
  \begin{equation}\label{eqhaha-1}
  \begin{split}
    &F(x^{k+1})\leqslant l_f(x^{k+1};y^k)+\frac{L}{2}\|x^{k+1}-y^k\|^{2}+\Phi(|x^{k+1}|)\\
    &=  l_f(x^{k+1};y^k)+\frac{L\theta_{k}^2}{2}\|z^{k+1}-z^k\|^{2}+\Phi(|x^{k+1}|)\\
    &=(1-\theta_{k})l_f(x^k;y^k)+\theta_{k}l_f(z^{k+1};y^k)+\frac{L\theta_{k}^2}{2}\|z^{k+1}-z^k\|^{2}+\Phi(|x^{k+1}|)\\
    &=(1-\theta_{k})l_f(x^k;y^k)+\theta_{k}\left[l_f(z^{k+1};y^k)+\frac{L\theta_{k}}{2}\|z^{k+1}-z^k\|^2+\sum\limits_{i=1}^n s_{i}^{k+1}|z_{i}^{k+1}|\right]\\
    &\ \ \ \ + \sum_{i=1}^n\left[\phi(|x_i^{k+1}|)-\theta_{k} s_{i}^{k+1}|z_{i}^{k+1}|\right]\\
    &\leqslant l_f(x^k;y^k)+\theta_{k}\left[\frac{L\theta_{k}}{2}\|x^k-z^k\|^2+\sum\limits_{i=1}^n s_{i}^{k+1}|x^k_{i}|-\frac{L\theta_{k}}{2}\|x^k-z^{k+1}\|^2\right]\\
    &\ \ \ \ + \sum_{i=1}^n\left[\phi(|x_i^{k+1}|)-\theta_{k} s_{i}^{k+1}|z_{i}^{k+1}|\right],
  \end{split}
  \end{equation}
  where the first equality follows from the definitions of $x^{k+1}$ and $y^k$ in \eqref{chap:2thExtraScheme} so that
  \begin{equation*}
  x^{k+1}-y^k = [(1-\theta_{k})x^k+\theta_{k}z^{k+1}] - [(1-\theta_{k})x^k+\theta_{k}z^k] = \theta_k(z^{k+1}-z^k),
  \end{equation*}
  the second equality follows from the definition of $x^{k+1}$, and the last inequality follows from the definition of $z^{k+1}$ as a minimizer and the strong convexity of the objective of the minimization problem defining $z^{k+1}$.

  From the convexity of $f$ and \eqref{eqhaha-1}, we see further that
  \begin{equation}\label{eqhaha}
  \begin{split}
   F(x^{k+1})&\leqslant f(x^k)+\theta_{k}\left[\frac{L\theta_{k}}{2}\|x^k-z^k\|^2+\sum_{i=1}^n s_{i}^{k+1}|x_{i}^{k}|-\frac{L\theta_{k}}{2}\|x^k-z^{k+1}\|^2\right]\\
   &\ \ \ \ + \sum\limits_{i=1}^n\left[\phi(|x_i^{k+1}|)-\theta_{k} s_{i}^{k+1}|z_{i}^{k+1}|\right],\\
   &\leqslant f(x^k)+\theta_{k}\left[\frac{L\theta_{k}}{2}\|x^k-z^k\|^2-\frac{L\theta_{k}}{2}\|x^k-z^{k+1}\|^2\right]\\
   &\ \ \ \ +\sum\limits_{i=1}^n\left[\phi(|x_i^{k}|)+s_i^{k+1}(|x^{k+1}_{i}|- |x^{k}_{i}|)-\theta_{k} s_{i}^{k+1}|z_{i}^{k+1}|+\theta_k s_{i}^{k+1}|x_{i}^{k}|\right],
  \end{split}
  \end{equation}
  where the second inequality follows from the fact that $s^{k+1}=\Phi_+'(|x^k|)$ and the concavity of $\phi$.

  Next, observe from the last relation in \eqref{chap:2thExtraScheme} that for each $i=1,\ldots,n$,
  \begin{equation*}
  \begin{split}
    &|x^{k+1}_i| = |(1-\theta_k)x^k_i + \theta_kz^{k+1}_i|, \\
    \Longrightarrow \ \ \ &|x^{k+1}_i| \leqslant (1-\theta_k)|x^k_i| + \theta_k|z^{k+1}_i|,\\
    \Longrightarrow \ \ \ &|x^{k+1}_i| - |x_i^k| - \theta_k|z_i^{k+1}| + \theta_k|x_i^k|\leqslant 0.
  \end{split}
  \end{equation*}
  Combining this with \eqref{eqhaha} and the nonnegativity of $s_i^k$, we obtain further that for all $k\geqslant1$ that
  \begin{equation}\label{eqhehe}
  \begin{split}
   F(x^{k+1})&\leqslant f(x^k)+\Phi(|x^{k}|)+\theta_{k}\left[\frac{L\theta_{k}}{2}\|x^k-z^k\|^2-\frac{L\theta_{k}}{2}\|x^k-z^{k+1}\|^2\right] \\
   &=    F(x^k)+ \frac{L\theta^2_{k}(1-\theta_{k-1})^2}{2}\|x^{k-1}-z^k\|^2-\frac{L\theta^2_{k}}{2}\|x^k-z^{k+1}\|^2,
  \end{split}
  \end{equation}
  where the last equality follows from the last relation in \eqref{chap:2thExtraScheme}.

  Observe that for $k\geqslant0$, $x^k\in C$ and $\theta_k(z^{k+1}-x^k) = x^{k+1}-x^k$. Using these and the definition of $H_1$, we obtain from \eqref{eqhehe} that for $k\geqslant1$,
  \begin{equation}\label{F:decreasing}
  \begin{split}
     H_1(x^{k+1},x^{k})- H_1(x^k,x^{k-1}) &\leqslant \left(\frac{L\theta_{k}^2(1-\theta_{k-1})^2}{2}-\frac{L\theta^2_{k-1}}{2}\right)\|x^{k-1}-z^k\|^2\\
      &\leqslant -A_1\|x^{k-1}-z^k\|^2,
  \end{split}
  \end{equation}
  where $A_1 := \frac{L}2\inf_k\{\theta^2_{k-1}-\theta_{k}^2(1-\theta_{k-1})^2\}$, which is positive thanks to \eqref{alpha}. Thus, $\{H_1(x^{k},x^{k-1})\}_{k\geqslant1}$ is nonincreasing.

  In addition, since $x^k\in C$, we have
  \[
  v\le F(x^k)\leqslant F(x^k)+\frac{L}{2}\|x^k-x^{k-1}\|^2=H_1(x^k,x^{k-1}),
  \]
  showing that $\{H_1(x^{k},x^{k-1})\}$ is bounded from below. Thus, $\{H_1(x^{k},x^{k-1})\}_{k\geqslant1}$ is convergent. This proves (i).

  Next, summing \eqref{F:decreasing} from $k=1$ to $\infty$, we have
  \[
  A_1\sum_{k=1}^\infty \|x^{k-1}-z^k\|^2  \leqslant H_1(x^{1},x^{0})-\lim_{k}H_1(x^{k+1},x^{k}) < \infty.
  \]
  Therefore, $ \lim_k\|x^k-z^{k+1}\|= 0$, which further implies that

 \begin{equation}\label{equalities}
 \begin{split}
 \lim_{k} x^{k+1}-x^k&=          \lim_{k} \theta_{k}(z^{k+1}-x^k)=0;\\
 \lim_{k}x^{k+1}-z^{k+1}&=       \lim_{k} (1-\theta_{k})(x^k-z^{k+1})=0;\\
 \lim_{k} x^{k+1}-y^{k+1}&=      \lim_{k} \theta_{k+1}(x^{k+1}-z^{k+1})=0;
  \end{split}
 \end{equation}
where the first and second equalities are due to the last relation in \eqref{chap:2thExtraScheme} and the third equality is due to the second relation in \eqref{chap:2thExtraScheme}. Then we have
\begin{align*}
\begin{split}
  &\lim_{k} z^{k+1}-z^{k}=\lim_{k}\ (z^{k+1}- x^k) + (x^k - z^k) = 0,\\
  &\lim_{k} z^{k+1}-y^k = \lim_{k}\ (z^{k+1}-x^k) + (x^k - y^k) = 0.
\end{split}
\end{align*}
This proves (ii).

We now prove (iii). Notice from \eqref{F:decreasing} and the definition of $H_1$ that
\begin{equation*}
F(x^k)\leqslant H_1(x^{k},x^{k-1})\leqslant H_1(x^{1},x^{0}) = F(x^{1})+\frac{L}{2}\|x^1-x^0\|^2< \infty.
\end{equation*}
Since $F$ is level-bounded, we conclude from this inequality that $\{x^k\}$ is bounded. In view of this and the second equality in \eqref{equalities}, we conclude that $\{z^k\}$ is also bounded, i.e., (iii) holds.

Now we prove (iv).  Let $z^*$ be an accumulation point of $\{z^{k}\}$ and let $\{z^{k_j}\}$ be a subsequence such that $z^{k_j}\to z^*$.  Clearly $z^*\in C$. From \eqref{allGoTo0}, we know that
\begin{equation}\label{allGoTo02}
z^{k_j-1}\to z^*,\ \ y^{k_j-1}\to z^*,\ \ x^{k_j-1}\to z^*.
\end{equation}

Using the definition of $z^{k}$ as the minimizer of the optimization problem in \eqref{chap:2thExtraScheme} and the subdifferential calculus rules in \cite[Proposition~10.5]{RoWe09} and \cite[Proposition~10.9]{RoWe09}, we have
\begin{equation}\label{stationOfSub2}
0\in \nabla f(y^{k_{j}-1})+N_C(z^{k_{j}})+\theta_{k_{j}-1}L(z^{k_{j}}-z^{k_{j}-1})+s^{k_j}\circ\partial|z^{k_{j}}|.
\end{equation}

Next we show that
\begin{equation}\label{slimit2}
\lim\limits_{j}s^{k_j}=\Phi_{+}'(|z^*|).
\end{equation}
We consider two cases. First, for those $i$ satisfying $z_i^*\neq 0$, we have from the definition of $s^k$ and \eqref{allGoTo02} that
$\lim_{j}s^{k_j}_i=\phi_{+}'(|z_{i}^{*}|) $. On the other hand, for those $i$ corresponding to $z_i^*= 0$, we have by the definition of $s^k$ that $\lim_{j}s^{k_j}_i=\lim_j\phi'_+(|x_i^{k_j-1}|) = \ell = \phi_{+}'(0)$, thanks to Lemma~\ref{phiPropositions}(i). Therefore, $\lim_{j}s^{k_j}=\Phi_{+}'(|z^*|)$.

Now, notice that the set-valued mapping $x\rightrightarrows\Phi'_+(x)\circ\partial|x|$ is bounded because $\Phi'_+(x)\circ \partial |x| \subseteq [-\ell,\ell]^n$ for all $x\in \mathbb{R}^n$. Using this, \cite[Proposition~5.51]{RoWe09}, the closedness of convex subdifferentials, \eqref{slimit2} and \eqref{allGoTo02}, we see by passing to the limit in \eqref{stationOfSub2} that
\[
0\in \nabla f(z^*)+N_C(z^*)+\Phi_{+}'(|z^*|)\circ\partial|z^*|=\partial F(z^*),
\]
where the equality follows from Lemma \ref{subOfPhi}.
This completes the proof.
\end{proof}

\section{Iteratively reweighted $\ell_1$ algorithm with type-III extrapolation}\label{sec5}

In this section, we propose and analyze yet another version of iteratively reweighted $\ell_1$ algorithm with an extrapolation technique motivated from the method by Lan, Lu and Monteiro \cite{LaLuMo11}: this was stated as algorithm LLM in \cite{BeCaGr11} and was the first of its kinds whose complexity has been established in some nonconvex settings \cite{DrPa16,GhLa16}. We refer to the algorithm based on this extrapolation technique as the iteratively reweighted $\ell_1$ algorithm with type-III extrapolation (IRL$_1e_3$). The method is presented as Algorithm~\ref{alg3} below.

\begin{algorithm}
\caption{\bf Iteratively reweighted $\ell_1$ algorithm with type-III extrapolation (IRL$_1e_3$)}\label{alg3}
\begin{algorithmic}
\item[Step 0.] Input initial points $x^{0}, z^0\in  C$ and a sequence $\{\theta_k\}\subset (0,1]$. Set $k = 0$.

\item[Step 1.]Set
\begin{equation}\label{chap:3rdExtraScheme}
  \begin{cases}
 & s^{k+1}=\Phi'_{+}(|x^k|);\\
  &y^k=(1-\theta_{k})x^k+\theta_{k}z^k;\\
&\displaystyle z^{k+1}= \argmin_{x\in C}\left\{\left<\nabla f(y^k),x-y^k\right> +\frac{L\theta_{k}}{2}\|x-z^{k}\|^{2}+\sum\limits_{i=1}^n s_{i}^{k+1}|x_{i}|\right\};\\
&\displaystyle x^{k+1}= \argmin_{y\in C}\left\{\left<\nabla f(y^{k}), y-y^{k}\right>+\frac{L}{2}\|y-y^{k}\|^{2}+\sum\limits_{i=1}^{n}s_{i}^{k+1}|y_{i}|\right\}.
\end{cases}
\end{equation}

\item[Step 2.] If a termination criterion is not met, set $k = k+1$ and go to Step 1.
\end{algorithmic}
\end{algorithm}

Now we present convergence analysis for this algorithm. Our analysis is inspired by \cite[Section~8]{DrPa16} and relies heavily on the following auxiliary function:
\begin{equation*}
  H_3(x,y,w) = f(x)+\delta_C(x)+\Phi(|x|)+\frac{L}{2}\|w-y\|^2 +\frac{L}{2}\|w-x\|^2.
\end{equation*}
We start by characterizing the accumulation points of the sequence generated by IRL$_1e_3$ under suitable assumptions on $\{\theta_k\}$, and establish the convergence of the whole sequence under additional assumptions. In particular, we require $\{\theta_k\}$ to be chosen so that \eqref{assumptionOnTheta} holds: we will demonstrate how such $\{\theta_k\}$ can be chosen to satisfy this condition in our numerical experiments in Section~\ref{sec6}.

\begin{theorem}\label{Extra3thm1}
  Suppose that $\{\theta_k\}$ in IRL$_1e_3$ is chosen so that for some $\gamma \in (0,1)$,
  \begin{align}\label{assumptionOnTheta}
  \sup_{k\geqslant 1}\max\left\{\frac{\theta_k^2(1-\theta_{k-1})^2}{\gamma}-\theta_{k-1}^2,\frac{\theta_k^2}{1-\gamma}-1\right\} < 0.
  \end{align}
  Let $\{x^k,y^k,z^k\}$ be the sequences generated by IRL$_1e_3$ for solving \eqref{primal} and define $w^{k+1} := (1-\theta_k)x^k + \theta_k z^{k+1}$ for $k\geqslant0$.
  Then the following statements hold:
  \begin{enumerate}
  \item[{\rm(i)}]   $\{H_3(x^{k},x^{k-1},w^k)\}_{k\geqslant1}$ is a nonincreasing convergent sequence. Moreover, there exists a positive constant $D_3$ such that
  \begin{align}\label{dayu33}
  H_3(x^{k}\!,x^{k-1}\!,w^k)\!-\!H_3(x^{k+1}\!,x^{k}\!,w^{k+1})\!\geqslant\! D_3 (\|x^{k-1}\!-z^k\|^2\!+\!\|w^k-x^k\|^2)\!.
  \end{align}
  \item[{\rm(ii)}]  The sequence $\{x^k\}$ is bounded.
  \item[{\rm(iii)}] It holds that
  \begin{equation}\label{equalities33}
  \lim_{k}\max\{\|x^{k-1}-z^k\|,\|w^k-x^k\|,\|x^k-x^{k-1}\|,\|x^k-y^{k-1}\|\}=0.
  \end{equation}
  \item[{\rm(iv)}]  Any accumulation point of $\{x^{k}\}$ is a stationary point of \eqref{primal}.
  \end{enumerate}
\end{theorem}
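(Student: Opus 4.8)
The plan is to mimic the proofs of Theorems~\ref{Type1:thm1} and~\ref{Extra2thm1}, now with $H_3$ as the potential function; part~(i) carries essentially all the work, and (ii)--(iv) then follow by the standard routine.

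\emph{Part (i).} Write $l_f(x;y):=f(y)+\langle\nabla f(y),x-y\rangle$ and, for the $k$-th iteration, $Q_k(x):=l_f(x;y^k)+\tfrac L2\|x-y^k\|^2+\sum_{i=1}^n s_i^{k+1}|x_i|$, an $L$-strongly convex function whose minimizer over $C$ is $x^{k+1}$; note also $w^{k+1}\in C$, being a convex combination of $x^k,z^{k+1}\in C$. Combining the descent lemma for $f$ with the concavity of $\phi$ and $s^{k+1}=\Phi'_+(|x^k|)$ gives $F(x^{k+1})\le Q_k(x^{k+1})+\Phi(|x^k|)-\langle s^{k+1},|x^k|\rangle$, and testing the strong convexity of $Q_k$ at $w^{k+1}$ yields
\[
F(x^{k+1})+\tfrac L2\|w^{k+1}-x^{k+1}\|^2\le Q_k(w^{k+1})+\Phi(|x^k|)-\langle s^{k+1},|x^k|\rangle .
\]
Since $l_f(\cdot;y^k)$ is affine, $|\cdot|$ is convex, $s^{k+1}\ge0$ and $w^{k+1}-y^k=\theta_k(z^{k+1}-z^k)$, one bounds $Q_k(w^{k+1})$ by the $\theta_k$-weighted sum of its ``$x^k$-part'' and its ``$z^{k+1}$-part'' (along the lines of \eqref{eqhaha-1}); for the $z^{k+1}$-part I invoke the strong convexity of the subproblem defining $z^{k+1}$ in \eqref{chap:3rdExtraScheme} (its three-point inequality) tested at $x^k\in C$. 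After replacing the remaining linearizations by function values via the convexity of $f$, the $\|z^{k+1}-z^k\|^2$ terms cancel and, using $\tfrac{L\theta_k^2}{2}\|z^{k+1}-x^k\|^2=\tfrac L2\|w^{k+1}-x^k\|^2$, one arrives at $H_3(x^{k+1},x^k,w^{k+1})\le F(x^k)+\tfrac{L\theta_k^2}{2}\|x^k-z^k\|^2$.

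To finish (i) I would decompose $x^k-z^k=(x^k-w^k)+(1-\theta_{k-1})(x^{k-1}-z^k)$ (because $w^k-z^k=(1-\theta_{k-1})(x^{k-1}-z^k)$) and apply the weighted Young inequality $\|a+b\|^2\le\tfrac1{1-\gamma}\|a\|^2+\tfrac1\gamma\|b\|^2$ with the very $\gamma\in(0,1)$ of \eqref{assumptionOnTheta}. Recalling $H_3(x^k,x^{k-1},w^k)=F(x^k)+\tfrac{L\theta_{k-1}^2}{2}\|x^{k-1}-z^k\|^2+\tfrac L2\|w^k-x^k\|^2$, the two resulting coefficient gaps are exactly $\tfrac L2\big(1-\tfrac{\theta_k^2}{1-\gamma}\big)$ and $\tfrac L2\big(\theta_{k-1}^2-\tfrac{\theta_k^2(1-\theta_{k-1})^2}{\gamma}\big)$, both bounded below by a positive constant thanks to \eqref{assumptionOnTheta}; taking $D_3$ to be the smaller of these two infima yields \eqref{dayu33}. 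Monotonicity follows, and $H_3\ge F\ge v>-\infty$ gives the lower bound, hence convergence.

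\emph{Parts (ii)--(iv).} For (ii), \eqref{dayu33} gives $F(x^k)\le H_3(x^k,x^{k-1},w^k)\le H_3(x^1,x^0,w^1)<\infty$, so level-boundedness of $F$ makes $\{x^k\}$ (hence also $\{z^k\},\{y^k\},\{w^k\}$) bounded. For (iii), summing \eqref{dayu33} gives $\sum_k(\|x^{k-1}-z^k\|^2+\|w^k-x^k\|^2)<\infty$, so $\|x^{k-1}-z^k\|\to0$ and $\|w^k-x^k\|\to0$; then $\|x^k-x^{k-1}\|\le\|x^k-w^k\|+\theta_{k-1}\|z^k-x^{k-1}\|\to0$, and writing $x^k-y^{k-1}=(x^k-w^k)+\theta_{k-1}(z^k-z^{k-1})$ with $z^k-z^{k-1}=(z^k-x^{k-1})+(x^{k-1}-w^{k-1})+(1-\theta_{k-2})(x^{k-2}-z^{k-1})$ shows $\|x^k-y^{k-1}\|\to0$, establishing \eqref{equalities33}. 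For (iv), let $x^{k_j}\to\tilde x$; by (iii) also $y^{k_j-1}\to\tilde x$ and $x^{k_j-1}\to\tilde x$. The first-order optimality condition of the subproblem defining $x^{k}$ in \eqref{chap:3rdExtraScheme} (via \cite[Proposition~10.5]{RoWe09} and \cite[Proposition~10.9]{RoWe09}) reads $0\in\nabla f(y^{k_j-1})+N_C(x^{k_j})+L(x^{k_j}-y^{k_j-1})+s^{k_j}\circ\partial|x^{k_j}|$. Exactly as in the proof of Theorem~\ref{Type1:thm1}(iii), $s^{k_j}=\Phi'_+(|x^{k_j-1}|)\to\Phi'_+(|\tilde x|)$ (continuity of $\phi'$ on coordinates with $\tilde x_i\ne0$, Lemma~\ref{phiPropositions}(i) elsewhere); then, using that $\Phi'_+(\cdot)\circ\partial|\cdot|\subseteq[-\ell,\ell]^n$ is bounded, $L(x^{k_j}-y^{k_j-1})\to0$, continuity of $\nabla f$, and the closed graphs of $N_C$ and $\partial|\cdot|$ (\cite[Proposition~5.51]{RoWe09} together with closedness of convex subdifferentials), passing to the limit gives $0\in\nabla f(\tilde x)+N_C(\tilde x)+\Phi'_+(|\tilde x|)\circ\partial|\tilde x|=\partial F(\tilde x)$ by Lemma~\ref{subOfPhi}, so $\tilde x$ is stationary.

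The main obstacle is the bookkeeping in part~(i): arranging the combination of the two subproblem estimates so that the $\|z^{k+1}-z^k\|^2$ contributions cancel, and choosing the Young split so that the two leftover coefficients match precisely the two entries of the maximum in \eqref{assumptionOnTheta}; everything else is routine once (i) is in place.
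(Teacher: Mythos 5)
Your proposal is correct and follows essentially the same route as the paper's proof: descent lemma plus concavity of $\phi$, the strong-convexity (three-point) inequalities for the $x$-subproblem tested at $w^{k+1}$ and for the $z$-subproblem tested at $x^k$ (which indeed cancels the $\|z^{k+1}-z^k\|^2$ terms), the decomposition $x^k-z^k=(x^k-w^k)+(1-\theta_{k-1})(x^{k-1}-z^k)$ with the weighted Young inequality whose two coefficient gaps match the two entries of the maximum in \eqref{assumptionOnTheta}, and then the standard summation and limiting arguments for (ii)--(iv). No gaps.
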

\begin{proof}
In this proof, we write $l_f(x;y):=f(y)+\left<\nabla f(y),x-y\right> $ for notational simplicity.
Since $\nabla f$ is Lipschitz, we have
\begin{equation*}
  \begin{split}
    &F(x^{k+1})\leqslant l_f(x^{k+1};y^k)+\frac{L}{2}\|x^{k+1}-y^k\|^2+\Phi(|x^{k+1}|)\\
    &=l_f(x^{k+1};y^k)+\frac{L}{2}\|x^{k+1}-y^k\|^2+\sum\limits_{i=1}^n s^{k+1}_i|x^{k+1}_i|+\Phi(|x^{k+1}|)-\sum\limits_{i=1}^n s^{k+1}_i|x^{k+1}_i|\\
    &\leqslant l_f(w^{k+1};y^k)+\frac{L}{2}\|w^{k+1}-y^k\|^2+\sum\limits_{i=1}^n s^{k+1}_i|w^{k+1}_i|+\Phi(|x^{k+1}|)-\sum\limits_{i=1}^n s^{k+1}_i|x^{k+1}_i|\\
    & \ \ \ -\frac{L}{2}\|w^{k+1}-x^{k+1}\|^2,
  \end{split}
\end{equation*}
where $w^{k+1} = (1-\theta_k)x^k+\theta_k z^{k+1}$, and the second inequality follows from the definition of $x^{k+1}$ as the minimizer of the strongly convex subproblem for the $x$-update. Plugging the definition of $w^{k+1}$ into the first three terms in the last inequality above and invoking the definition of $y^k$, we see that
\begin{equation*}
  \begin{split}
    F(x^{k+1})&\leqslant (1-\theta_k)l_f(x^k;y^k)+\theta_k l_f(z^{k+1};y^k)+\frac{L\theta_k^2}{2}\|z^{k+1}-z^k\|^2+\Phi(|x^{k+1}|)\\&
    \ \ \ +\sum_{i=1}^n s^{k+1}_i|(1-\theta_k)x^k_i+\theta_k z^{k+1}_i|- \sum_{i=1}^ns^{k+1}_i|x^{k+1}_i|-\frac{L}{2}\|w^{k+1}-x^{k+1}\|^2.
  \end{split}
\end{equation*}
Applying the relation $|(1-\theta_k)x^k_i+\theta_k z^{k+1}_i|\le (1-\theta_k)|x^k_i|+\theta_k |z^{k+1}_i|$ to the inequality above and grouping terms, we obtain further that $F(x^{k+1})$ is bounded above by
\begin{equation*}
  \begin{split}
 &(1-\theta_k)l_f(x^k;y^k)+\theta_k \left[l_f(z^{k+1};y^k)+\frac{L\theta_k}{2}\|z^{k+1}-z^k\|^2+\sum\limits_{i=1}^n s^{k+1}_i| z^{k+1}_i|\right]\\
     &+\sum\limits_{i=1}^n (1-\theta_k)s^{k+1}_i|x^k_i|+ \Phi(|x^{k+1}|)- \sum_{i=1}^n s^{k+1}_i|x^{k+1}_i|-\frac{L}{2}\|w^{k+1}-x^{k+1}\|^2\\
     &\leqslant l_f(x^k;y^k)+\theta_k \left[\frac{L\theta_k}{2}\|x^{k}-z^k\|^2+\sum\limits_{i=1}^n s^{k+1}_i| x^{k}_i|\right]-\frac{L\theta_k^2}{2}\|x^k-z^{k+1}\|^2\\
     &\ \ +\sum\limits_{i=1}^n (1-\theta_k)s^{k+1}_i|x^k_i|+\Phi(|x^{k+1}|)- \sum_{i=1}^n s^{k+1}_i|x^{k+1}_i|-\frac{L}{2}\|w^{k+1}-x^{k+1}\|^2\\
     &= l_f(x^k;y^k)+\sum\limits_{i=1}^n\left[ s^{k+1}_i| x^{k}_i|+\phi(|x^{k+1}_i|)- s^{k+1}_i|x^{k+1}_i|\right]+\frac{L\theta_k^2}{2}\|x^{k}-z^k\|^2\\
     &\ \ -\frac{L}{2}\|w^{k+1}-x^{k+1}\|^2-\frac{L\theta_k^2}{2}\|x^k-z^{k+1}\|^2,
  \end{split}
\end{equation*}
where the inequality follows from the definition of $z^{k+1}$ as the minimizer of the strongly convex subproblem for the $z$-update.

Applying the convexity of $f$, the concavity of $\phi$ and the fact that $s^{k+1} = \Phi'_+(|x^k|)$ to the above upper bound, we further have
 \begin{equation}\label{extra3thm_1}
  \begin{split}
F(x^{k+1})
     &\leqslant f(x^k)+\sum\limits_{i=1}^n\left[\phi(|x_i^{k}|)+s_i^{k+1}(|x^{k+1}_{i}|- |x^{k}_{i}|)- s^{k+1}_i|x^{k+1}_i|+s^{k+1}_i|x^{k}_i|\right]\\
     &\ \ \ +\frac{L\theta_k^2}{2}\|x^{k}-z^k\|^2 -\frac{L}{2}\|w^{k+1}-x^{k+1}\|^2-\frac{L\theta_k^2}{2}\|x^k-z^{k+1}\|^2\\
      &= F(x^k)+\frac{L\theta_k^2}{2}\|x^{k}-z^k\|^2 -\frac{L}{2}\|w^{k+1}-x^{k+1}\|^2-\frac{L\theta_k^2}{2}\|x^k-z^{k+1}\|^2.
  \end{split}
\end{equation}

Now, observe that $w^k-x^k = (1-\theta_{k-1})x^{k-1}+\theta_{k-1} z^{k}-x^k=x^{k-1}-x^k+\theta_{k-1}(z^k-x^{k-1})$ for any $k\geqslant1$, we thus have
\[
x^k-z^k = x^k-x^{k-1}+x^{k-1}-z^k = (1-\theta_{k-1})(x^{k-1}-z^k)-(w^k-x^k).
\]
Using this and the inequality that $(a+b)^2\leqslant \frac{a^2}{\gamma}+\frac{b^2}{1-\gamma} $, where $\gamma\in (0,1)$ is as in \eqref{assumptionOnTheta}, we deduce further from \eqref{extra3thm_1} that $F(x^{k+1})$ is bounded above by
 \begin{equation*}
  \begin{split}
    &F(x^k) +\frac{L\theta_k^2(1-\theta_{k-1})^2}{2\gamma}\|x^{k-1}-z^k\|^2 +\frac{L\theta_k^2}{2(1-\gamma)}\|w^k-x^k\|^2\\
    &-\frac{L}{2}\|w^{k+1}-x^{k+1}\|^2-\frac{L\theta_k^2}{2}\|x^k-z^{k+1}\|^2\\
& = F(x^k)+\frac{L}2\!\left(\theta_{k-1}^2\|x^{k-1}-z^k\|^2\! +\|w^k-x^k\|^2\!-\|w^{k+1}-x^{k+1}\|^2\!-\theta_k^2\|x^k-z^{k+1}\|^2\right)\\
&\ \ +\left(\frac{\theta_k^2(1-\theta_{k-1})^2}{\gamma}-\theta_{k-1}^2\right)\frac{L}{2}\|x^{k-1}-z^k\|^2+\left(\frac{\theta_k^2}{1-\gamma}-1\right)\frac{L}{2}\|w^k-x^k\|^2.
  \end{split}
\end{equation*}
Using the assumptions on $\theta_k$, we then obtain the following estimate:
 \begin{equation*}
  \begin{split}
  F(x^{k+1})&\leqslant F(x^k) -A_2(\|x^{k-1}-z^k\|^2 + \|w^k-x^k\|^2)\\
  & \ \ \ +\! \frac{L}2\!\left(\theta_{k-1}^2\|x^{k-1}-z^k\|^2\! +\!\|w^k-x^k\|^2\!-\!\|w^{k+1}-x^{k+1}\|^2\!-\theta_k^2\|x^k-z^{k+1}\|^2\right)\\
  &= F(x^k) -A_2(\|x^{k-1}-z^k\|^2 + \|w^k-x^k\|^2)\\
  & \ \ \ +\! \frac{L}2\!\left(\|w^k - x^{k-1}\|^2\! +\!\|w^k-x^k\|^2\!-\!\|w^{k+1}-x^{k+1}\|^2\!-\|w^{k+1}-x^k\|^2\right),
  \end{split}
\end{equation*}
where $A_2 = \frac{L}2\inf_k\min\left\{\theta_{k-1}^2-\frac{\theta_k^2(1-\theta_{k-1})^2}{\gamma},1-\frac{\theta_k^2}{1-\gamma}\right\}$, which is positive according to \eqref{assumptionOnTheta}, and the equality follows from the definition of $w^k$ so that $w^{k} - x^{k-1} = \theta_{k-1}(z^k - x^{k-1})$.
Rearranging terms in the above inequality and invoking the definition of $H_3$, we have for all $k\geqslant1$ that
\begin{align}\label{decrease3}
A_2(\|x^{k-1}-z^k\|^2+\|w^k-x^k\|^2)\leqslant H_3(x^k,x^{k-1},w^k)-H_3(x^{k+1},x^{k},w^{k+1}),
\end{align}
which means that $\{H_3(x^{k},x^{k-1},w^k)\}_{k\geqslant 1}$ is nonincreasing. In addition, it is not hard to see that $\{H_3(x^{k},x^{k-1},w^k)\}$ is bounded from below. Thus, the sequence $\{H_3(x^{k},x^{k-1},w^k)\}$ is convergent. This proves (i).

Next, we have from \eqref{decrease3} that for any $k\geqslant1$ that
\[
F(x^k)\leqslant H_3(x^{k},x^{k-1},w^k)\leqslant H_3(x^{1},x^{0},w^1) < \infty,
\]
Since $F$ is level-bounded, we conclude from this inequality that $\{x^k\}$ is bounded and therefore (ii) holds.

We now prove (iii). Summing \eqref{decrease3} from $k=1$ to $\infty$, we obtain
\begin{equation*}
A_2\sum_{k=1}^{\infty}(\|x^{k-1}-z^k\|^2+\|w^k-x^k\|^2)\leqslant H_3(x^1,x^{0},w^1)-\lim_{k}H_3(x^{k+1},x^{k},w^{k+1})< \infty.
\end{equation*}
Thus, we have
\begin{equation}\label{equationsPre3}
\lim_{k}\|x^{k-1}-z^k\|=\lim_k\|w^k-x^k\|=0.
\end{equation}
Combining these relations with the definition of $w^k$, we have
\begin{equation}\label{equations3_4-1}
  w^k-x^{k-1}=\theta_{k-1}(z^k-x^{k-1})\rightarrow 0.
\end{equation}
Combining this with  \eqref{equationsPre3}, we see further that
\begin{equation}\label{equations3_4}
 x^k-x^{k-1}=x^k-w^k+w^k-x^{k-1}=(x^k-w^k)+\theta_{k-1}(z^k-x^{k-1})\rightarrow 0.
\end{equation}
Combining this with the definition of $y^k$ and \eqref{equationsPre3}, we obtain
\begin{equation}\label{equations3_5}
\begin{split}
 &y^k-x^k=\theta_k(z^k-x^k)=\theta_k(z^k-x^{k-1})+\theta_k(x^{k-1}-x^k)\\
 &=\theta_k(z^k-x^{k-1})+\theta_k\left[(w^k-x^k)+\theta_{k-1}(x^{k-1}-z^k)\right]\\
 &=(\theta_k-\theta_k\theta_{k-1})(z^k-x^{k-1})+\theta_k(w^k-x^k)\rightarrow 0.
 \end{split}
\end{equation}
Using this together with \eqref{equations3_4} and \eqref{equationsPre3}, we deduce that
\begin{equation}\label{equations3_6}
\begin{split}
 & \ \ w^k-y^{k-1}=w^k-x^k+(x^k-x^{k-1})+(x^{k-1}-y^{k-1})\\
 & =\theta_{k-1}(z^k\!-\!x^{k-1})\!+\!(\theta_{k-1}\theta_{k-2}\!-\!\theta_{k-1})(z^{k-1}\!-\!x^{k-2})+\theta_{k-1}(x^{k-1}-w^{k-1})\to 0.
 \end{split}
\end{equation}
Finally, using \eqref{equations3_6}, we have
\begin{equation*}
\begin{split}
 &x^k-y^{k-1}=x^k-w^{k}+w^{k}-y^{k-1}\\
 &\!=(x^k-w^k)\!+\!\theta_{k-1}(z^k\!-\!x^{k-1})\!+\!(\theta_{k-1}\theta_{k-2}\!-\!\theta_{k-1})(z^{k-1}\!-\!x^{k-2})\!+\!\theta_{k-1}(x^{k-1}-w^{k-1}),
 \end{split}
\end{equation*}
which also goes to zero thanks to \eqref{equationsPre3}. This proves (iii).

Finally, we prove (iv). Let $\tilde{x}$ be an accumulation point of $\{x^{k}\}$ and let $\{x^{k_{j}}\}$ be a subsequence such that $x^{k_{j}}\to \tilde{x}$. From the first-order optimality condition of the second subproblem in \eqref{chap:3rdExtraScheme} and subdifferential calculus rules in \cite[Proposition~10.5]{RoWe09} and \cite[Proposition~10.9]{RoWe09}, we have
\begin{align}\label{converge33}
0\in \nabla f(y^{k_{j}})+N_C(x^{k_{j}+1})+L(x^{k_{j}+1}-y^{k_{j}})+s^{k_{j}+1}\circ\partial|x^{k_{j}+1}|.
\end{align}
Using the same arguments as in the proof of Theorem \ref{Type1:thm1}(iv), we have $\lim_{j}s^{k_j+1}=\Phi_{+}'(|\tilde{x}|)$.

Now, observe that $\Phi'_+(x)\circ \partial |x| \subseteq [-\ell,\ell]^n$ so that the set-valued mapping $x\rightrightarrows\Phi'_+(x)\circ\partial|x|$ is bounded. Using these, \eqref{equalities33}, the closedness of convex subdifferentials and \cite[Proposition~5.51]{RoWe09}, passing to the limit as $j$ goes to $\infty$ in \eqref{converge33}, we have
\begin{align*}
0 \in\nabla f(\tilde{x})+N_C(\tilde{x})+\Phi'_{+}(|\tilde{x}|)\circ\partial|\tilde x|=\partial F(\tilde{x}),
\end{align*}
where the last equality follows from Lemma \ref{subOfPhi}. Thus (iv) holds and this completes the proof.
\end{proof}

\begin{corollary}\label{constant33}
  Suppose that the $\{\theta_k\}$ in IRL$_1e_3$ is chosen so that \eqref{assumptionOnTheta} holds. Let $\{x^k,z^k\}$ be the sequences generated by IRL$_1e_3$ for solving \eqref{primal} and define $w^{k+1} := (1-\theta_k)x^k + \theta_k z^{k+1}$ for $k\geqslant0$. Then the set of accumulation points of $\{(x^{k},x^{k-1},w^{k})\}$, denoted by $\Omega_3$, is a nonempty compact subset of $\dom \partial H_3$. Moreover, it holds that $H_3\equiv \lim_k H_3(x^k,x^{k-1},w^k)$ on $\Omega_3$.
\end{corollary}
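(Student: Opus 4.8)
The plan is to follow the template of Corollary~\ref{constant}, now applied to the triple sequence $\{(x^{k},x^{k-1},w^{k})\}$ and the potential $H_3$. First I would use Theorem~\ref{Extra3thm1}(ii) to conclude that $\{x^{k}\}$ is bounded, so that its set of accumulation points, say $\Lambda_3$, is nonempty and compact. Then, appealing to \eqref{equalities33} — which in particular gives $\lim_{k}\|x^{k}-x^{k-1}\|=0$ and $\lim_{k}\|w^{k}-x^{k}\|=0$ — I would argue that $\{x^{k-1}\}$ and $\{w^{k}\}$ have exactly the same accumulation points as $\{x^{k}\}$. Consequently $\Omega_3=\{(x,x,x):\ x\in\Lambda_3\}$, which is nonempty and compact.

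Next I would check that $\Omega_3\subseteq\dom\partial H_3$. By Theorem~\ref{Extra3thm1}(iv), every $x\in\Lambda_3$ satisfies $0\in\partial F(x)$, hence $\Lambda_3\subseteq\dom\partial F$. Viewing $H_3(x,y,w)$ as the sum of $F(x)$ (a function of the $x$-block only) and the smooth function $\frac{L}{2}\|w-y\|^2+\frac{L}{2}\|w-x\|^2$, a direct subdifferential computation shows that at a diagonal point $(x,x,x)$ the partial gradients of the quadratic part in the $y$- and $w$-blocks vanish there, so $\partial H_3(x,x,x)\ne\emptyset$ whenever $\partial F(x)\ne\emptyset$; therefore $\Omega_3\subset\dom\partial H_3$.

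Finally I would establish constancy of $H_3$ on $\Omega_3$. Fix an arbitrary $(\tilde x,\tilde x,\tilde x)\in\Omega_3$ and a subsequence $x^{k_j}\to\tilde x$; by \eqref{equalities33} we also have $x^{k_j-1}\to\tilde x$ and $w^{k_j}\to\tilde x$, and in particular $\|w^{k_j}-x^{k_j-1}\|\to 0$ and $\|w^{k_j}-x^{k_j}\|\to 0$. Using the continuity of $F$ on $C$ (note $\tilde x\in C$ as the limit of points of the closed set $C$), I would conclude
\[
H_3(\tilde x,\tilde x,\tilde x)=F(\tilde x)=\lim_{j}F(x^{k_j})=\lim_{j}H_3(x^{k_j},x^{k_j-1},w^{k_j}),
\]
where the last equality uses the two limits displayed just above. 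Since $\{H_3(x^{k},x^{k-1},w^{k})\}$ is convergent by Theorem~\ref{Extra3thm1}(i) and $(\tilde x,\tilde x,\tilde x)$ was arbitrary, this yields $H_3\equiv\lim_{k}H_3(x^{k},x^{k-1},w^{k})$ on $\Omega_3$. I do not anticipate a genuine obstacle: the only mildly delicate points are confirming that all accumulation points of the triple sequence lie on the diagonal (immediate from \eqref{equalities33}) and identifying $\partial H_3$ on that diagonal, both of which are routine, so the whole argument is essentially the bookkeeping already carried out in Corollary~\ref{constant}.
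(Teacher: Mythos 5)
Your proposal is correct and follows essentially the same route as the paper's proof: boundedness from Theorem~\ref{Extra3thm1}(ii), the diagonal structure of $\Omega_3$ from \eqref{equalities33}, membership in $\dom\partial H_3$ from Theorem~\ref{Extra3thm1}(iv), and constancy of $H_3$ via continuity of $F$ on $C$ together with the vanishing quadratic terms and the convergence of $\{H_3(x^k,x^{k-1},w^k)\}$. The only difference is that you spell out the subdifferential computation behind $\Omega_3\subset\dom\partial H_3$, which the paper leaves as "routine to check."
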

\begin{proof}
First, we see from Theorem~\ref{Extra3thm1}(ii) that the set of accumulation points of $\{x^k\}$, denoted by $\Lambda_3$, is nonempty and compact. Moreover, in view of Theorem~\ref{Extra3thm1}(iii), we deduce that $\Omega_3 = \{(x,x,x):\; x\in \Lambda_3\}$, which is clearly nonempty and compact. Finally, since $\Lambda_3\subseteq\{x:\; 0\in \partial F(x)\}\subseteq \dom \partial F$ according to Theorem~\ref{Extra3thm1}(iv), it is routine to check that $\Omega_3\subset \dom \partial H_3$ as required.

Now, fix any $(\tilde x,\tilde x,\tilde x)\in \Omega_3$ and let $\{x^{k_j}\}$ be a subsequence of $\{x^{k}\}$ with $x^{k_j}\to\tilde{x}$. Then
\begin{equation*}
  \begin{split}
    H_3(\tilde{x},\tilde{x},\tilde{x}) &= F(\tilde x)=\lim_{j}F(x^{k_j})+\frac{L}{2}\|w^{k_j}-x^{k_j}\|^2 + \frac{L}{2}\|w^{k_j}-x^{k_j-1}\|^2\\
    & = \lim_{j}H_3(x^{k_j},x^{k_j-1},w^{k_j}),
  \end{split}
\end{equation*}
where the second equality follows from the continuity of $F$ on $C$ and Theorem~\ref{Extra3thm1}(iii). Since $\{H_3(x^{k},x^{k-1},w^k)\}$ is convergent according to Theorem~\ref{Extra3thm1}(i) and $(\tilde x,\tilde x,\tilde x)\in \Omega_3$ is chosen arbitrarily, we conclude that $H_3\equiv \lim_kH_3(x^{k},x^{k-1},w^k)$ on $\Omega_3$. This completes the proof.
\end{proof}

Next, we show under some assumptions on $H_3$ and $\phi_+'$ that the sequence $\{x^k\}$ generated by IRL$_1e_3$ converges to a stationary point of \eqref{primal}. We first prove the following auxiliary lemma.
\begin{lemma}\label{lemma1Of33}
Suppose that the $\{\theta_k\}$ in IRL$_1e_3$ is chosen so that \eqref{assumptionOnTheta} holds and that $\phi_{+}'$ is Lipschitz continuous. Let $\{x^k,z^k\}$ be the sequences generated by IRL$_1e_3$ for solving \eqref{primal} and define $w^{k+1} := (1-\theta_k)x^k + \theta_k z^{k+1}$ for $k\geqslant0$. Then there exists a positive constant $C_3$ such that for all $k\geqslant 2$,
        \begin{equation*}
        \begin{split}
        &{\rm dist}\left((0,0,0),\partial H_3(x^{k},x^{k-1},w^k)\right)\\
        &\leqslant C_3(\|x^k-w^k\|+\|z^k-x^{k-1}\|+\|x^{k-1}-w^{k-1}\|+\|z^{k-1}-x^{k-2}\|).
        \end{split}
        \end{equation*}

\end{lemma}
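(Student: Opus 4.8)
The plan is to compute an element of $\partial H_3(x^k,x^{k-1},w^k)$ explicitly using the first-order optimality conditions of the two subproblems in \eqref{chap:3rdExtraScheme}, and then bound its norm in terms of the quantities appearing in the right-hand side. First I would compute $\partial H_3$ via the calculus rules: since $H_3(x,y,w) = F(x) + \frac{L}{2}\|w-y\|^2 + \frac{L}{2}\|w-x\|^2$, we have, using \cite[Exercise~8.8]{RoWe09}, \cite[Proposition~10.5]{RoWe09} and Lemma~\ref{subOfPhi},
\begin{equation*}
\partial H_3(x,y,w) = \begin{pmatrix} \nabla f(x) + N_C(x) + \Phi'_+(|x|)\circ\partial|x| + L(x-w) \\ L(y-w) \\ L(w-y) + L(w-x) \end{pmatrix}.
\end{equation*}
Next, from the first-order optimality condition of the $x$-update in \eqref{chap:3rdExtraScheme} (the $x^{k}$-update, one iteration earlier), there exist $\xi^{k}\in\partial|x^{k}|$ and $\zeta^k\in N_C(x^k)$ with $0 = \nabla f(y^{k-1}) + \zeta^k + \Phi'_+(|x^{k-1}|)\circ\xi^{k} + L(x^{k}-y^{k-1})$. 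I would then set $\eta^k := \nabla f(x^{k}) + \zeta^k + \Phi'_+(|x^{k}|)\circ\xi^{k} + L(x^{k}-w^{k})$, so that $(\eta^k,\, L(x^{k-1}-w^k),\, L(w^k-x^{k-1})+L(w^k-x^k))$ lies in $\partial H_3(x^k,x^{k-1},w^k)$; note the last two slots already have the desired form, and $\|L(x^{k-1}-w^k)\|$ and $\|L(w^k-x^{k-1})+L(w^k-x^k)\|$ are directly controlled by $\|z^k - x^{k-1}\|$ (via \eqref{equations3_4-1}, $w^k - x^{k-1} = \theta_{k-1}(z^k-x^{k-1})$) and $\|w^k-x^k\|$.

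The crux is bounding $\|\eta^k\|$. Subtracting the optimality relation, one gets
\begin{equation*}
\eta^k = \nabla f(x^{k}) - \nabla f(y^{k-1}) + L(x^{k}-w^k) - L(x^{k}-y^{k-1}) + \left[\Phi'_+(|x^{k}|) - \Phi'_+(|x^{k-1}|)\right]\circ\xi^{k}.
\end{equation*}
Using the Lipschitz continuity of $\nabla f$ (modulus $L$), the elementary inequality $\|a\circ b\|\le\|b\|_\infty\|a\|$ together with $\|\xi^k\|_\infty\le 1$, and the Lipschitz continuity of $\phi'_+$ (modulus $\rho$, whence $\|\Phi'_+(|x^k|)-\Phi'_+(|x^{k-1}|)\|\le\rho\|x^k-x^{k-1}\|$), I would obtain
\begin{equation*}
\|\eta^k\| \le (L+\rho)\|x^k - x^{k-1}\| + L\|y^{k-1}-w^k\| + L\|x^k - w^k\|.
\end{equation*}
It then remains to express $\|x^k-x^{k-1}\|$ and $\|y^{k-1}-w^k\|$ through the four target quantities. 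The second expansion in the proof of Theorem~\ref{Extra3thm1} gives $x^k-x^{k-1} = (x^k-w^k) + \theta_{k-1}(z^k-x^{k-1})$ (see \eqref{equations3_4}), which is already in the required form. For $\|y^{k-1}-w^k\|$ I would use $w^k - y^{k-1} = (w^k-x^k) + (x^k-x^{k-1}) + (x^{k-1}-y^{k-1})$ and the chain of identities \eqref{equations3_6}, which expresses this difference as a combination of $z^k-x^{k-1}$, $z^{k-1}-x^{k-2}$ and $x^{k-1}-w^{k-1}$, with coefficients bounded uniformly (since $\{\theta_k\}\subset(0,1]$). Collecting all terms and using $\mathrm{dist}((0,0,0),\partial H_3)\le \|(\eta^k, L(x^{k-1}-w^k), L(w^k-x^{k-1})+L(w^k-x^k))\|$, a single constant $C_3$ absorbing $L$, $\rho$ and the (bounded) $\theta$-coefficients finishes the proof; the requirement $k\ge 2$ is exactly what makes the terms indexed at $k-2$ well-defined.

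The main obstacle is purely bookkeeping: one must carefully track which subproblem's optimality condition is being used and at which index, and make sure that the "extra" displacement terms introduced when replacing $y^{k-1}$ by $w^k$ and $x^k$ by $x^{k-1}$ are all re-expressed in terms of the four allowed quantities $\|x^k-w^k\|$, $\|z^k-x^{k-1}\|$, $\|x^{k-1}-w^{k-1}\|$, $\|z^{k-1}-x^{k-2}\|$ rather than, say, $\|x^k-x^{k-1}\|$ or $\|y^{k-1}-w^k\|$ directly; the identities \eqref{equations3_4-1}, \eqref{equations3_4}, \eqref{equations3_6} from the preceding theorem do exactly this and should be invoked verbatim. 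No genuinely new estimate beyond Lipschitz continuity of $\nabla f$ and $\phi'_+$ is needed.
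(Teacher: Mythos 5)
Your proposal is correct and follows essentially the same route as the paper's proof: the same subgradient element $(\eta^k,\,-L(w^k-x^{k-1}),\,L(w^k-x^{k-1})+L(w^k-x^k))$ built from the optimality condition of the $x$-update, the same bound on $\|\eta^k\|$ via the Lipschitz moduli $L$ and $\rho$, and the same final re-expression through the identities \eqref{equations3_4-1}, \eqref{equations3_4}, \eqref{equations3_5}, \eqref{equations3_6}. The only (immaterial) difference is how the triangle inequality splits $\|x^k-y^{k-1}\|$, which changes the intermediate constants but not the conclusion.
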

\begin{proof}
  First, using the optimality condition of the $x$-update in \eqref{chap:3rdExtraScheme} and the definition of $s^{k}$,
  there exist a $\xi^{k}\in \partial|x^{k}|$ and a $\zeta^k\in N_C(x^k)$ such that for all $k\geqslant 2$,
  \begin{align}\label{eqzero33}
  0= \nabla f(y^{k-1})+\zeta^k+\Phi'_{+}(|x^{k-1}|)\circ\xi^{k}+L(x^{k}-y^{k-1}).
  \end{align}
  Define $\eta^k:= \nabla f(x^{k})+\zeta^k+\Phi'_{+}(|x^{k}|)\circ\xi^{k} +L(x^{k}-w^{k})$.
  Then we have
\begin{equation*}
\begin{split}
 &(\eta^k,-L(w^{k}-x^{k-1}),L(w^k-x^{k-1})+L(w^k-x^k))\\
 &\in \begin{pmatrix}
 \nabla f(x^{k})+ N_C(x^k)+\Phi'_{+}(|x^{k}|)\partial|x^{k}| +L(x^{k}-w^{k}) \\
 \{-L(w^{k}-x^{k-1})\} \\
 \{L(w^k-x^{k-1})+L(w^k-x^k)\}
\end{pmatrix}\\
&=\partial H_3(x^{k},x^{k-1},w^k);
  \end{split}
\end{equation*}
here the equality follows from \cite[Exercise~8.8]{RoWe09}, \cite[Proposition~10.5]{RoWe09} and Lemma~\ref{subOfPhi}. Hence, there exists a $C_0 > 0$ so that for all $k\geqslant 1$,
\begin{equation}\label{distH3}
{\rm dist}\left((0,0,0),\partial H_3(x^{k},x^{k-1},w^k)\right) \leqslant C_0(\|\eta^k\| + \|w^{k}-x^{k-1}\| + \|w^{k}-x^{k}\|).
\end{equation}

Next, from the definition of $\eta^k$ and \eqref{eqzero33}, we see further that
 \begin{equation}\label{eqhahahehe33}
  \begin{split}
  &\|\eta^k\|=\|\eta^k-\left[\nabla f(y^{k-1})+\zeta^k+\Phi'_{+}(|x^{k-1}|)\circ\xi^{k}+L(x^{k}-y^{k-1})\right]\|\\
  &=\|\nabla f(x^{k})-\nabla f(y^{k-1}) +\left[\Phi'_{+}(|x^{k}|)-\Phi'_{+}(|x^{k-1}|)\right]\circ\xi^{k}-L(w^k-y^{k-1})\|\\
  &\leqslant \|\nabla f(x^{k})-\nabla f(y^{k-1})\|+\|\Phi'_{+}(|x^{k}|)-\Phi'_{+}(|x^{k-1}|)\|+L\|w^k-y^{k-1}\|\\
  &\leqslant \|\nabla f(x^{k})-\nabla f(y^{k-1})\|+\sqrt{\sum_{i=1}^n\rho^2(|x^{k}_{i}|-|x^{k-1}_{i}|)^2}+L\|w^k-y^{k-1}\|\\
  &\leqslant L\|x^k - y^{k-1}\|+\rho\|x^{k}-x^{k-1} \|+L\|w^k-y^{k-1}\|\\
  &\leqslant [L+\rho]\|x^k - x^{k-1}\| + L\|x^{k-1}-y^{k-1}\|+L\|w^k-y^{k-1}\|,
  \end{split}
  \end{equation}
where the first inequality follows from the elementary inequality $\|a\circ b\|\leqslant\|b\|_\infty\|a\|$ for any $a$, $b\in \mathbb{R}^n$ and the fact that $\|\xi^k\|_\infty\leqslant1$ since $\xi^{k}\in \partial|x^{k}|$; the second inequality follows from the Lipschitz continuity of $\phi'_+$ (with modulus $\rho$); the third inequality holds because $\nabla f$ is Lipschitz continuous. The desired conclusion now follows from \eqref{distH3}, \eqref{eqhahahehe33} and the relations \eqref{equations3_4-1}, \eqref{equations3_4}, \eqref{equations3_5}, \eqref{equations3_6}, which state that $x^k-x^{k-1}$, $w^k-x^{k-1}$, $x^{k-1}-y^{k-1}$ and $w^k-y^{k-1}$ can be written as linear combinations of $z^k - x^{k-1}$, $x^k - w^k$, $z^{k-1} - x^{k-2}$, $x^{k-1} - w^{k-1}$ with coefficients at most $2$.
\end{proof}

We will now establish the convergence of the whole sequence $\{x^k\}$ generated by IRL$_1e_3$ under some assumptions. Our analysis is similar to standard convergence analysis based on KL property; see, for example, \cite{AtBoReSo10,AtBoSv13}. We include the proof for the convenience of the readers.

\begin{theorem}\label{thm2_extra3}
Suppose that the $\{\theta_k\}$ in IRL$_1e_3$ is chosen so that \eqref{assumptionOnTheta} holds, that $H_3$ is a KL function and that $\phi_{+}'$ is Lipschitz continuous. Let $\{x^{k}\}$ be the sequence generated by IRL$_1e_3$ for solving \eqref{primal}. Then $\sum_{k=1}^{\infty}\|x^{k}-x^{k-1}\|<\infty$ and $\{x^{k}\}$ converges to a stationary point of \eqref{primal}.
\end{theorem}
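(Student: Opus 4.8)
The plan is to follow the standard Kurdyka--{\L}ojasiewicz descent argument, now applied to the potential $H_3$ along the ``augmented'' sequence $\{(x^k,x^{k-1},w^k)\}$, exactly paralleling the proof of Theorem~\ref{thm2}. First I would invoke Theorem~\ref{Extra3thm1}(i) to let $w_3 := \lim_k H_3(x^k,x^{k-1},w^k)$, which exists. I would dispose of the easy case first: if $H_3(x^{k'},x^{k'-1},w^{k'}) = w_3$ for some $k'$, then by monotonicity the potential is constant for all $k \geqslant k'$, so the right-hand side of \eqref{dayu33} vanishes for such $k$, forcing $\|x^{k-1}-z^k\| = \|w^k-x^k\| = 0$; combined with \eqref{equations3_4} this gives $x^k = x^{k'}$ for all $k \geqslant k'$, i.e., finite convergence, and the conclusion holds trivially. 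So from then on I assume $H_3(x^k,x^{k-1},w^k) > w_3$ for all $k$.

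Next I would set up the uniformized KL inequality: by Corollary~\ref{constant33}, $H_3$ is constant (equal to $w_3$) on the nonempty compact set $\Omega_3 \subseteq \dom\partial H_3$, so Lemma~\ref{UKL} (together with the assumption that $H_3$ is a KL function) yields $\epsilon_3, \eta_3 > 0$ and $\varphi_3 \in \Xi_{\eta_3}$ such that $\varphi_3'(H_3(x,y,w)-w_3)\,{\rm dist}(0,\partial H_3(x,y,w)) \geqslant 1$ whenever ${\rm dist}((x,y,w),\Omega_3) < \epsilon_3$ and $w_3 < H_3(x,y,w) < w_3 + \eta_3$. Since $\{x^k\}$ is bounded (Theorem~\ref{Extra3thm1}(ii)) and \eqref{equalities33} forces $(x^k,x^{k-1},w^k)$ to approach $\Omega_3$, and since $H_3(x^k,x^{k-1},w^k) \downarrow w_3$, there is an index $N$ beyond which both conditions hold, so the KL inequality applies at $(x^k,x^{k-1},w^k)$ for all $k > N$.

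The core estimate then proceeds as in Theorem~\ref{thm2}: using concavity of $\varphi_3$, write $\Delta_k := \varphi_3(H_3(x^k,x^{k-1},w^k)-w_3) - \varphi_3(H_3(x^{k+1},x^k,w^{k+1})-w_3)$ and bound
\[
\Delta_k \cdot {\rm dist}(0,\partial H_3(x^k,x^{k-1},w^k)) \geqslant H_3(x^k,x^{k-1},w^k) - H_3(x^{k+1},x^k,w^{k+1}) \geqslant D_3(\|x^{k-1}-z^k\|^2 + \|w^k-x^k\|^2),
\]
using the KL inequality and \eqref{dayu33}. Then I upper-bound ${\rm dist}(0,\partial H_3(x^k,x^{k-1},w^k))$ via Lemma~\ref{lemma1Of33} by $C_3$ times a sum of four consecutive ``gap'' terms $\|x^k-w^k\| + \|z^k-x^{k-1}\| + \|x^{k-1}-w^{k-1}\| + \|z^{k-1}-x^{k-2}\|$. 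Combining with $4ab \leqslant (a+b)^2$ and taking square roots, one derives an inequality of the form $\|x^{k-1}-z^k\| + \|w^k-x^k\| \leqslant \text{const}\cdot\Delta_k + \tfrac14(\text{telescoping difference of earlier gap terms})$; summing over $k$ from $N+1$ to $\infty$, the $\Delta_k$ terms telescope to a finite quantity (bounded by $\varphi_3$ evaluated at $H_3(x^{N+1},x^N,w^{N+1})-w_3$) and the telescoping gap terms collapse, giving $\sum_k (\|x^{k-1}-z^k\| + \|w^k-x^k\|) < \infty$. Finally, since \eqref{equations3_4} expresses $x^k - x^{k-1}$ as a bounded-coefficient linear combination of $z^k - x^{k-1}$ and $w^k - x^k$, summability of those gaps gives $\sum_{k=1}^\infty \|x^k - x^{k-1}\| < \infty$; hence $\{x^k\}$ is Cauchy and converges, and by Theorem~\ref{Extra3thm1}(iv) its limit is a stationary point of \eqref{primal}.

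The one point requiring slight care — the main obstacle, though a mild one — is the bookkeeping when summing the square-root inequality: because Lemma~\ref{lemma1Of33} involves a window of \emph{four} gap terms spanning indices $k$, $k-1$, $k-2$ rather than just two as in Lemma~\ref{lemma1}, I need to verify that grouping these into a telescoping sum still works, i.e., that after summation the ``boundary'' contributions from the first few indices remain finite and nothing accumulates. This is handled by defining an aggregate quantity such as $a_k := \|x^k - w^k\| + \|z^k - x^{k-1}\|$ and checking that the recursion takes the schematic form $a_k \leqslant \text{const}\cdot\Delta_k + \lambda(a_{k-1} + a_{k-2} - a_k - a_{k-1})$-type telescoping after a Young's-inequality split with coefficients chosen so the sum over $k$ telescopes; the argument is routine but the indices must be tracked honestly.
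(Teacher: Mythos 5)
Your proposal is correct and follows essentially the same route as the paper's own proof: the same dichotomy (finite convergence versus $H_3>w_3$ for all $k$), the same invocation of Corollary~\ref{constant33} and Lemma~\ref{UKL}, the same combination of \eqref{dayu33} with Lemma~\ref{lemma1Of33} and Young's inequality, and the same telescoping of the aggregate $\|x^k-w^k\|+\|z^k-x^{k-1}\|$ before passing to $\sum\|x^k-x^{k-1}\|$ via \eqref{equations3_4}. The bookkeeping issue you flag is handled in the paper exactly as you anticipate, with the four-term bound collapsing to a difference of two consecutive aggregates.
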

\begin{proof}
In view of Theorem~\ref{Extra3thm1}(iv), it suffices to prove $\sum_{k=1}^{\infty}\|x^{k}-x^{k-1}\|<\infty$, which implies convergence of $\{x^{k}\}$.
Now, recall from Theorem~\ref{Extra3thm1}(i) that $w_3:= \lim_{k}H_3(x^{k},x^{k-1},w^k)$ exists.
If there exists $k'\ge 1$ such that $H_3(x^{k'},x^{k'-1},w^{k'})=w_3$, then \eqref{dayu33} implies that for any $k\geqslant k'$, $H_3(x^{k},x^{k-1},w^k)=H_3(x^{k'},x^{k'-1},w^{k'})=w_3$. Invoking \eqref{dayu33} again together with \eqref{equations3_4}, we obtain that $x^{k}=x^{k'}$ when $k\geqslant k'$, i.e. the sequence generated converges finitely and hence the conclusion of this theorem holds trivially. In what follows, we consider the case where $H_3(x^{k},x^{k-1},w^k)>w_3$ for all $k$.

Recall from Corollary~\ref{constant33} that $\Omega_3$ is a nonempty compact subset of $\dom \partial H_3$ and $H_3 \equiv w_3$. Since $H_3$ is a KL function, using Lemma \ref{UKL}, there exist $\epsilon_3, \eta_3 >0$ and $\varphi_3\in \Xi_{\eta_3}$ such that
\[
\varphi_3'\left(H_3(x,y,w)-w_3\right) {\rm dist}\left(0,\partial H_3(x,y,w)\right)\geqslant 1
\]
for any $(x,y,w)$ satisfying ${\rm dist}((x,y,w),\Omega_3)<\epsilon_3$ and $w_3<H_3(x,y,w)<w_3+\eta_3$. In addition, recall from Corollary~\ref{constant33} that $\Omega_3$ is the set of accumulation points of $\{(x^k,x^{k-1},w^k)\}$. Since $\{(x^k,x^{k-1},w^k)\}$ is bounded in view of Theorem~\ref{Extra3thm1}(ii) and (iii), there exists $k_0$ such that whenever $k\geqslant k_0$,
\[
{\rm dist}((x^k,x^{k-1},w^k),\Omega_3)<\epsilon_3.
\]
Furthermore, it follows from the definition of $w_3$ that there exists $k_1$ such that whenever $k\geqslant k_1$, $w_3<H_3(x^k,x^{k-1},w^k)<w_3+\eta_3$.
Define $N'=\max\left\{k_0,k_1\right\}$. Then for all $k>N'$, we have
\[
\varphi_3'(\underbrace{H_3(x^k,x^{k-1},w^k)-w_3}_{\chi_k}) \cdot{\rm dist}\left(0,\partial H_3(x^k,x^{k-1},w^k)\right)\geqslant 1.
\]
Using this and the concavity of $\varphi_3$, we have further that
\begin{equation*}
  \begin{split}
   & (\underbrace{\varphi_3(\chi_k)-\varphi_3(\chi_{k+1})}_{\Delta_k})\cdot{\rm dist}\left(0,\partial H_3(x^k,x^{k-1},w^k)\right)\\
   &\geqslant  \varphi_3'(\chi_k)\cdot {\rm dist}\left(0,\partial H_3(x^k,x^{k-1},w^k)\right)\cdot(\chi_k - \chi_{k+1})\\
   & \geqslant H_3(x^k,x^{k-1},w^k)-H_3(x^{k+1},x^{k},w^{k+1})\geqslant D_3\left(\|x^{k-1}-z^k\|^2+\|w^k-x^k\|^2\right),
  \end{split}
\end{equation*}
where the last inequality is from \eqref{dayu33}.
Now, using the relations that $(a+b)^2\leqslant 2(a^2+b^2)$ and $4ab\leqslant (a+b)^2$ for $a$, $b\in \mathbb{R}$, we further deduce from this inequality that
\begin{equation*}
\begin{split}
&\left(\|x^{k-1}-z^k\|+\|w^k-x^k\|\right)^2\leqslant 2\left(\|x^{k-1}-z^k\|^2+\|w^k-x^k\|^2\right)\\
&\leqslant \frac{8C_3}{D_3}\Delta_k \cdot\frac{1}{4C_3}{\rm dist}\left(0,\partial H_3(x^k,x^{k-1},w^k)\right)\\
&\leqslant \frac{8C_3}{D_3}\Delta_k \cdot\frac{1}{4}\left(\|x^k-w^k\|+\|z^k-x^{k-1}\|+\|x^{k-1}-w^{k-1}\|+\|z^{k-1}-x^{k-2}\|\right)\\
&\leqslant \left[\frac{2C_3}{D_3}\Delta_k
+\frac{1}{4}\left(\|x^k-w^k\|+\|z^k-x^{k-1}\|+\|x^{k-1}-w^{k-1}\|+\|z^{k-1}-x^{k-2}\|\right)\right]^2,
\end{split}
\end{equation*}
where the third inequality follows from Lemma~\ref{lemma1Of33}. Taking square root on both sides of the above inequality and rearrange terms, we obtain
\begin{equation*}
\begin{split}
&\frac{1}{2}\left(\|x^{k-1}-z^k\|+\|w^k-x^k\|\right)\\
&\leqslant\frac{2C_3}{D_3}\Delta_k+\frac{1}{4}\left[\|x^{k-1}-w^{k-1}\|+\|z^{k-1}-x^{k-2}\|-\|x^k-w^k\|-\|z^k-x^{k-1}\|\right].
\end{split}
\end{equation*}
Summing this inequality from $k=N'+1$ to $\infty$, using \eqref{equations3_4} and the fact that $H_3(x^{k},x^{k-1})>w_3$ for all $k$,  we obtain that
\begin{equation*}
\begin{split}
&\frac{1}{2}\sum_{k=N'+1}^{\infty}\|x^k-x^{k-1}\|=\frac{1}{2}\sum_{k=N'+1}^{\infty}\|x^k-w^k+\theta_{k-1}(z^k-x^{k-1})\|\\
&\leqslant \frac{1}{2}\sum_{k=N'+1}^{\infty}\left(\|x^{k-1}-z^k\|+\|w^k-x^k\|\right)\\
&\leqslant \frac{2C_3}{D_3}\varphi_3\left(\chi_{N'+1}\right)+\frac{1}{4}\left(\|x^{N'}-w^{N'}\|+\|z^{N'}-x^{N'-1}\|\right)< \infty,
\end{split}
\end{equation*}
which implies the convergence of $\{x^k\}$ and $\sum_{k=1}^{\infty}\|x^{k}-x^{k-1}\|<\infty$.
\end{proof}

\section{Numerical test}\label{sec6}
In this section, we perform numerical experiments to study the behaviors of IRL$_1e_1$, IRL$_1e_2$ and IRL$_1e_3$. All codes are written in Matlab, and the experiments are performed in Matlab 2015b on a 64-bit PC with an Intel(R) Core(TM) i7-4790 CPU (3.60GHz) and 32GB of RAM.

We consider the following log penalty regularized least squares problem \cite{GoZhLuHuYe13}:
\begin{align}\label{LOG}
  \min\ F_{\log}(x) := \frac{1}{2}\|Ax-b\|^{2}+\sum_{i=1}^{n}\left(\lambda\log(|x_{i}|+\epsilon)-\lambda\log\epsilon\right),
\end{align}
where $A\in \mathbb{R}^{m\times n}$, $b\in \mathbb{R}^{m}$, $\lambda > 0$, $\epsilon > 0$. This is a special case of \eqref{primal} with $f$ being the least squares loss function, $C=\mathbb{R}^n$ and $\phi(t) = \lambda\log(t+\epsilon)-\lambda\log\epsilon$. Thus, we deduce from Theorem~\ref{Extra2thm1} that the sequence generated by IRL$_1e_2$ for a choice of $\{\theta_k\}$ satisfying \eqref{alpha} clusters at a stationary point of \eqref{LOG}. In addition, one can check that the corresponding $H_1$ and $H_3$ are continuous subanalytic functions \cite[Section~6.6]{FaPa03}, and hence they are KL functions \cite[Theorem~3.1]{BoDaLe07}. Consequently, we know from Theorem~\ref{thm2} (resp., Theorem~\ref{thm2_extra3}) that if $\sup_k\beta_k < 1$ (resp., $\{\theta_k\}$ satisfies \eqref{assumptionOnTheta}), then the whole sequence generated by IRL$_1e_1$ (resp., IRL$_1e_3$) converges to a stationary point of \eqref{LOG}.

In our experiments below, we compare IRL$_1e_1$, IRL$_1e_2$ and IRL$_1e_3$ with two other state-of-the-art algorithms for solving \eqref{LOG}: the general iterative shrinkage and thresholding algorithm (GIST) \cite{GoZhLuHuYe13} and an adaptation of the iteratively reweighted $\ell_1$ algorithm \cite[Algorithm~7]{Lu2014} with nonmonotone line-search (IRL$_1{ls}$). We discuss the implementation details of these algorithms below.

{\bf IRL$_1e_1$}. For this algorithm, we set $L=\lambda_{\max}(AA^T)$ and choose $\{\beta_k\}$ as in FISTA\cite{BeTe09,Nes13} with both the adaptive and fixed restart schemes \cite{ODoCa15}:\footnote{In our experiments, this quantity is computed in matlab with code {\sf lambda=norm(A*A')}, when $ m<2000$ and by {\sf opts.issym = 1}; {\sf lambda = eigs(A*A',1,'LM',opts); otherwise.}}
\[
\beta_k=\theta_k(\theta_{k-1}^{-1}-1)\ {\rm with\ } \theta_{k+1}=\frac{2}{1 + \sqrt{1+4/\theta_k^2}} {\rm\ and\ } \theta_0=\theta_{-1}=1
\]
and we reset $\theta_{k-1}=\theta_k=1$ every $200$ iterations, or when $\left< y^{k-1}-x^k,x^k-x^{k-1}\right> > 0$. It is clear that $\sup_k \beta_k < 1$.
We initialize this algorithm at the origin and terminate it when
\[
\frac{2L\|x^{k+1}-y^k\| +\epsilon^{-2}\lambda\|x^{k+1} - x^k\|}{\max\left\{1,\|x^{k+1}\|\right\}}<10^{-4}.
\]
Notice from \eqref{optimal_1} that  this termination criterion implies that $\dist(0,\partial F(x^{k+1}))\leqslant 10^{-4}\max\left\{1,\|x^{k+1}\|\right\}$.

{\bf IRL$_1e_2$}. For this algorithm, we set $L=\lambda_{\max}(AA^T)$, and let $\theta_k$ be as in FISTA\cite{BeTe09,Nes13} for the first 50 iterations, i.e., $\theta_0=1$ and $\theta_{k+1}=\frac{2}{1 + \sqrt{1+4/\theta_k^2}}$ for $0\leqslant k\leqslant 48$, $\theta_{50} = \theta_{49}$, and we update $\theta_{k} = \theta_{99-k}$ for $51\leqslant k \leqslant 99$ and set $\theta_k=\theta_{{\sf mod}(k,100)}$ for $k\geqslant 100$. It can be verified with simple computation that this choice of $\{\theta_k\}$ satisfies \eqref{alpha}.
We initialize the algorithm at the origin and terminate it when
\[
\frac{L\|z^{k+1}-y^k\|+\epsilon^{-2}\lambda\|x^k-z^{k+1}\|+L\|x^{k+1}-y^k\|}{\max\left\{1,\|z^{k+1}\|\right\}}<10^{-4}.
\]
Observe from \eqref{stationOfSub2} that this termination criterion implies that $\dist(0,\partial F(z^{k+1}))\leqslant 10^{-4}\max\left\{1,\|z^{k+1}\|\right\}$.

{\bf IRL$_1e_3$}. For this algorithm, we set $L=\lambda_{\max}(AA^T)$, and we generate a sequence $\{\rho_k\}$ as in FISTA in the first $57$ iterations and fix it from then on, i.e., $\rho_{0}=1$ and
\[\rho_{k+1}=
\begin{cases}
  \frac{2}{1 + \sqrt{1+4/\rho_k^2}}  &0\leqslant k \leqslant 55,\\
  \rho_{56}  &k\geqslant 56.
\end{cases}
\]
We then set $\theta_k = \rho_{k+6}$ for all $k\ge 0$.
It can be verified that the above $\{\theta_k\}$ satisfies \eqref{assumptionOnTheta} with $\gamma = 0.95$.
We initialize the algorithm at the origin and terminate it when
\[
\frac{2L\|x^{k+1}-y^k\| +\epsilon^{-2}\lambda\|x^{k+1} - x^k\|}{\max\left\{1,\|x^{k+1}\|\right\}}<10^{-4}.
\]
Note from \eqref{converge33} that  this termination criterion implies that $\dist(0,\partial F(x^{k+1}))\leqslant 10^{-4}\max\left\{1,\|x^{k+1}\|\right\}$.

\textbf{GIST.} This algorithm was proposed in \cite{GoZhLuHuYe13}; see also \cite{WrNoFi09}. Following the notation in \cite[Appendix A, Algorithm~1]{ChLuPo12}, here we set $c = 10^{-4}, \tau=2, M = 4$ and set $L_0^0=1$ and
\begin{equation}\label{BBstep}
L_k^0=\min\left\{10^8, \max\left\{\frac{\|A(x^k-x^{k-1})\|^2}{\|x^k-x^{k-1}\|^2}, 10^{-8}\right\}\right\}
\end{equation}
for $k\geqslant 1$. Note that the subproblem in \cite[Appendix A, (A.4)]{ChLuPo12} now becomes
\[
\min\limits_{x\in\mathbb{R}^n}\left\{\left<A^T(Ax^k-b),x-x^k\right>+\frac{L_k}{2}\|x-x^k\|^2+\sum_{i=1}^{n}\left(\lambda\log(|x_{i}|+\epsilon)-\lambda\log\epsilon\right)\right\}
\]
whose closed form solution can be found in \cite{GoZhLuHuYe13}. We initialize the algorithm at the origin and terminate it when
\[
\frac{\|\nabla f(x^k)-\nabla f(x^{k+1})\| + L_k\|x^k-x^{k+1}\|}{\max\left\{1,\|x^{k+1}\|\right\}}<10^{-4};
\]
this condition implies that $\dist(0,\partial F(x^{k+1}))\leqslant 10^{-4}\max\left\{1,\|x^{k+1}\|\right\}$.

\textbf{IRL$_1{ls}$.} This algorithm is an adaptation of \cite[Algorithm~7]{Lu2014}, which was originally designed for solving \eqref{primal} with $\phi(t)=\lambda\min\{p(ts-\frac{s^q}{q}):\;{0\leqslant s\leqslant(\frac{\epsilon}{\lambda n})^{\frac{1}{q}}}\}$ for some $p\in(0,1)$, $q = \frac{p}{p-1}$, $\lambda>0$ and $\epsilon>0$. For ease of presentation, we present our adaptation below in Algorithm~\ref{alg_adap}. Its convergence can be proved by adapting the convergence analysis of \cite[Algorithm~7]{Lu2014} and that of \cite{WrNoFi09}.

\begin{algorithm}
\caption{\bf Iteratively reweighted $\ell_1$ algorithm with nonmonotone line-search for \eqref{LOG} (IRL$_1{ls}$)}\label{alg_adap}
\begin{algorithmic}
\item[Step 0.] Let $0 < L_{\min} < L_{\max}$, $\tau> 1$ and $c > 0$ be given. Input an initial point $x^0$ and set $k = 0$.
\item[Step 1.]Choose $L_{k}^{0}\in [L_{\min},L_{\max}]$ and set $L_{k}=L_{k}^{0}$.
\item[Step 2.] Set
\[
\begin{cases}
&\displaystyle s_i^{k+1}= \frac{\lambda}{|x_i^{k}|+\epsilon}\ {\rm for}\ i=1,\ldots,n;\\
&\displaystyle x^{k+1}=\argmin_{y\in C} \left\{\left<\nabla f(x^k), y-x^k\right> + \frac{L_k}{2}\|y-x^k\|^2 + \sum_{i=1}^ns_i^{k+1}|y_i|\right\}.\\
\end{cases}
\]
\item[Step 3.]If
\[
F_{\log}(x^{k+1})> \max_{[k-M]_+\leqslant s\leqslant k}F_{\log}(x^s)  - \frac{c}{2}\|x^{k+1}-x^{k}\|^{2},
\]
let $L_{k}=\tau L_{k}$, and go to Step 2.
\item[Step 4.] If a termination criterion is not met, set $k = k+1$ and go to Step 1.
\end{algorithmic}
\end{algorithm}

For this algorithm, we let $ L_{\min} = 10^{-8}, L_{\max }= 10^{8}, c = 10^{-4}, \tau=2, M = 4$ and set $L_0^0=1$ and for $k\geqslant 1$, we set $L^0_k$ as in \eqref{BBstep}.
We initialize the algorithm at the origin and terminate it when
\[
\frac{\|\nabla f(x^k)-\nabla f(x^{k+1})\| + (L_k +\epsilon^{-2}\lambda)\|x^k-x^{k+1}\|}{\max\left\{1,\|x^{k+1}\|\right\}}<10^{-4}.
\]
From Step 2 in Algorithm~\ref{alg_adap}, one can observe that this termination criterion implies that $\dist(0,\partial F(x^{k+1}))\leqslant 10^{-4}\max\left\{1,\|x^{k+1}\|\right\}$ at termination.

We compare the above algorithms on random instances. We first generate an $m\times n$ matrix $A$ with i.i.d. standard Gaussian entries and then normalize this matrix to have unit column norms. A subset $T$
of size $r=[\frac{m}{9}]$ is then chosen uniformly at random from $\{1, 2, 3, . . ., n\}$ and an $r$-sparse vector $y\in \mathbb{R}^m$ supported on $T$ with i.i.d.
standard Gaussian entries is generated. We then set $b=Ay+0.01\cdot\omega$, where $\omega\in \mathbb{R}^m $ has i.i.d. standard Gaussian entries.

In our experiments, we set $(m,n) = (720i,2560i)$, with $i=1,\ldots, 10$. We pick $\lambda=5\times 10^{-4}$ in \eqref{LOG} and experiment with $\epsilon=0.1$ and $0.5$. We present the corresponding results in Tables~\ref{0.5} and \ref{0.1}, respectively, where we report the time for computing $\lambda_{\max}(A^T A)$ ($t_0$), the CPU times in seconds (time) and the function values at termination (fval), averaged over $20$ random instances. One can see that our algorithms are usually faster than GIST and IRL$_1{ls}$ and return slightly better function values at termination. Moreover, IRL$_1e_1$ and IRL$_1e_3$ are usually faster than IRL$_1e_2$.

\begin{landscape}
 \begin{table}[H]
 \caption{$\lambda= 5e-4,\epsilon=0.5$}\label{0.5}
\scriptsize{
\begin{tabular}{|c|c|c|ccccc||ccccc|}
  \hline
 \multicolumn{2}{|c|}{\bf Problem Size} & & \multicolumn{5}{c||}{\bf time} & \multicolumn{5}{c|}{\bf fval} \\
 $m$  &$n$  &    $t_{0}$   &GIST&IRL$_1{ls}$ &IRL$_1e_1$  &IRL$_1e_2$ &IRL$_1e_3$&GIST&IRL$_1{ls}$ &IRL$_1e_1$  &IRL$_1e_2$ &IRL$_1e_3$
 \\\hline
   720 &  2560 &   0.1 &      1.7&   1.5&   0.7&   0.9&   0.6 & 3.7918e-02& 3.7918e-02& 3.7897e-02& 3.7900e-02& 3.7896e-02\\
  1440 &  5120 &   0.7 &      7.0&   6.6&   3.3&   4.3&   2.6 & 7.5904e-02& 7.5900e-02& 7.5859e-02& 7.5867e-02& 7.5858e-02\\
  2160 &  7680 &   0.7 &     15.0&  14.3&   7.2&   9.4&   5.6 & 1.1443e-01& 1.1443e-01& 1.1436e-01& 1.1437e-01& 1.1436e-01\\
  2880 & 10240 &   1.3 &     25.9&  25.1&  12.6&  16.5&   9.9 & 1.5224e-01& 1.5224e-01& 1.5215e-01& 1.5217e-01& 1.5215e-01\\
  3600 & 12800 &   2.4 &     39.4&  38.7&  19.8&  25.6&  15.4 & 1.8805e-01& 1.8805e-01& 1.8794e-01& 1.8796e-01& 1.8794e-01\\
  4320 & 15360 &   3.8 &     57.0&  55.9&  28.2&  36.4&  21.9 & 2.2774e-01& 2.2773e-01& 2.2761e-01& 2.2763e-01& 2.2761e-01\\
  5040 & 17920 &   6.3 &     76.8&  75.9&  38.8&  49.1&  30.0 & 2.6491e-01& 2.6491e-01& 2.6474e-01& 2.6478e-01& 2.6475e-01\\
  5760 & 20480 &   8.0 &     99.2&  98.5&  49.7&  62.9&  38.7 & 3.0627e-01& 3.0627e-01& 3.0609e-01& 3.0614e-01& 3.0609e-01\\
  6480 & 23040 &  11.3 &    126.1& 125.0&  63.8&  81.8&  49.7 & 3.4231e-01& 3.4233e-01& 3.4212e-01& 3.4216e-01& 3.4212e-01\\
  7200 & 25600 &  15.0 &    159.8& 157.3&  80.5& 102.7&  62.4 & 3.8133e-01& 3.8133e-01& 3.8111e-01& 3.8116e-01& 3.8111e-01\\
\hline
\end{tabular}   }
\end{table}

 \begin{table}[H]
 \caption{$\lambda= 5e-4,\epsilon=0.1$}\label{0.1}
\scriptsize{
\begin{tabular}{|c|c|c|ccccc||ccccc|}
\hline
 \multicolumn{2}{|c|}{\bf Problem Size} & & \multicolumn{5}{c||}{\bf time} & \multicolumn{5}{c|}{\bf fval} \\
 $m$  &$n$  &    $t_{0}$   &GIST&IRL$_1{ls}$ &IRL$_1e_1$  &IRL$_1e_2$ &IRL$_1e_3$&GIST&IRL$_1{ls}$ &IRL$_1e_1$  &IRL$_1e_2$ &IRL$_1e_3$
 \\\hline
   720 &  2560 &   0.1 &      0.6&   0.5&   0.3&   0.4&   0.3 & 9.3307e-02& 9.3307e-02& 9.3305e-02& 9.3302e-02& 9.3303e-02\\
  1440 &  5120 &   0.7 &      2.2&   2.0&   1.4&   1.8&   1.5 & 1.8663e-01& 1.8663e-01& 1.8663e-01& 1.8662e-01& 1.8662e-01\\
  2160 &  7680 &   0.7 &      4.6&   4.3&   3.0&   4.0&   3.3 & 2.7940e-01& 2.7940e-01& 2.7939e-01& 2.7938e-01& 2.7938e-01\\
  2880 & 10240 &   1.4 &      8.0&   7.6&   5.3&   7.0&   5.7 & 3.7401e-01& 3.7401e-01& 3.7401e-01& 3.7400e-01& 3.7400e-01\\
  3600 & 12800 &   2.5 &     12.4&  11.7&   8.3&  11.1&   9.1 & 4.6537e-01& 4.6537e-01& 4.6536e-01& 4.6535e-01& 4.6535e-01\\
  4320 & 15360 &   3.8 &     17.4&  16.7&  11.6&  15.3&  12.6 & 5.6347e-01& 5.6347e-01& 5.6346e-01& 5.6344e-01& 5.6344e-01\\
  5040 & 17920 &   6.2 &     23.1&  21.8&  15.5&  20.7&  16.9 & 6.5319e-01& 6.5320e-01& 6.5318e-01& 6.5316e-01& 6.5317e-01\\
  5760 & 20480 &   7.9 &     29.9&  28.3&  20.2&  26.8&  22.0 & 7.4930e-01& 7.4931e-01& 7.4929e-01& 7.4927e-01& 7.4927e-01\\
  6480 & 23040 &  10.9 &     38.1&  36.6&  25.7&  34.2&  28.1 & 8.3961e-01& 8.3961e-01& 8.3960e-01& 8.3957e-01& 8.3957e-01\\
  7200 & 25600 &  14.5 &     48.8&  46.2&  32.1&  42.7&  35.0 & 9.3472e-01& 9.3472e-01& 9.3471e-01& 9.3467e-01& 9.3468e-01\\
\hline
\end{tabular} }
\end{table}

\end{landscape}

\section{Concluding remarks}\label{sec7}

In this paper, we study how popular extrapolation techniques such as those presented in \cite{AuTe06,BeTe09,LaLuMo11,Nes13} can be incorporated into the iteratively reweighted $\ell_1$ scheme for solving \eqref{primal}. We proposed three versions of such algorithms, namely IRL$_1e_1$, IRL$_1e_2$ and IRL$_1e_3$, and analyzed their convergence under suitable assumptions on the extrapolation parameters. Convergence of the whole sequence is also established for IRL$_1e_1$ and IRL$_1e_3$ under additional Lipschitz differentiability assumption on $\phi$ and KL assumption on some suitable potential functions.

One future research direction is to look at other extrapolation techniques such as those used in the third APG method in \cite{Tse10}. It is also interesting to investigate whether the analysis in \cite{DrPa16,GhLa16} can be adapted to analyze the iteration complexity of IRL$_1e_1$, IRL$_1e_2$ and IRL$_1e_3$.

\end{document}